\documentclass[reqno,12pt]{amsart}

\usepackage{amsmath,amsthm,amssymb}
\textwidth=170mm
\textheight=250mm

\setlength{\oddsidemargin}{0pt}
\setlength{\evensidemargin}{0cm}
\setlength{\topmargin}{-35pt}

\newtheorem{theorem}{Theorem}[section]
\newtheorem{lemma}[theorem]{Lemma}
\newtheorem{corollary}[theorem]{Corollary}
\newtheorem{remark}[theorem]{Remark}

\theoremstyle{definition}
\newtheorem{definition}[theorem]{Definition}

\numberwithin{equation}{section}

\begin{document}

\title[Besov classes on finite- and infinite-dimensional spaces]
{Besov classes on finite- and infinite-dimensional spaces and embedding theorems}

\author[Egor~D.~Kosov]{Egor~D.~Kosov}
\address{}
\curraddr{}
\email{}
\thanks{The author is a Young
Russian Mathematics award winner and would like to thank its sponsors and jury.\\
This research was supported by the Russian Science Foundation Grant 17-11-01058
at Lo\-mo\-no\-sov
Moscow State University.
}

\maketitle

\begin{abstract}
We give a new description of classical Besov spaces in terms of a new modulus of continuity.
Then a similar approach is used to introduce Besov classes on an infinite-dimensional space endowed with a Gaussian measure.
\end{abstract}

\noindent
Keywords: Besov class, fractional Sobolev class, Ornstein--Uhlenbeck semigroup, embedding theorem

\noindent
MSC: primary  46E35, secondary 28C20, 46G12

\section{Introduction}
In this work we continue the study of Nikolskii--Besov classes started
in  \cite{BKP}, where an equivalent
description of these classes was presented
characterizing the inclusion of a function to the Nikolskii--Besov class in terms
of action on test functions in
the spirit of the classical definitions of Sobolev classes and the class
of functions of bounded variation. Namely,
a function $f\in L^p(\mathbb{R}^n)$ belongs to the Nikolskii--Besov class $B^\alpha_{p, \infty}(\mathbb{R}^n)$ with
$0<\alpha<1$ if and only if
there is a constant $C$ such that
\begin{equation}\label{eq1}
\int_{\mathbb{R}^n} \mathrm{div}\Phi(x) f(x)\, dx \le C\|\Phi\|^{\alpha}_q\|\mathrm{div}\Phi\|^{1-\alpha}_q
\end{equation}
for each vector field
$\Phi$ of class $C_0^\infty(\mathbb{R}^n, \mathbb{R}^n)$, where $q= p/(p-1)$.
If we take $\alpha=1$ and $p=1$, we obtain the classical definition of a function
of bounded variation.
This new characterization has already found some applications in
the study of the distributions of polynomials on spaces with Gaussian
(and general log-concave) measures
(see \cite{Kos}, \cite{BKZ}, and also \cite{B16}).

In the present paper, we give a similar
equivalent characterization
 for  general Besov spaces $B^\alpha_{p,\theta}(\mathbb{R}^n)$.
We recall that the Besov space $B^\alpha_{p,\theta}(\mathbb{R}^n)$
with parameters
$\alpha\in(0,1)$, $p\in[1, \infty)$, $\theta\in[1, \infty]$
consists of all functions $f\in L^p(\mathbb{R}^n)$ such that
the quantity
$$
\biggl(\int_{\mathbb{R}^n}\bigl[|h|^{-\alpha}\|f_h-f\|_p\bigr]^\theta |h|^{-n}dh\biggr)^{1/\theta}
$$
is finite,
where $f_h(x) := f(x-h)$ (see \cite{BIN}, \cite{Nikol77}, \cite{Trieb},
and \cite{Stein}).
However, for further purposes, it is more convenient to use
another equivalent definition in terms
of the $L^p$-modulus of continuity.
Recall that the $L^p$-modulus of continuity of a function
$f\in L^p(\mathbb{R}^n)$ is defined by the equality
$$
\omega_p(f, \varepsilon):= \sup_{|h|\le \varepsilon}\|f_h - f\|_p.
$$
Note that the function $\omega_p(f, \cdot)$ is nondecreasing and subadditive,
which means that
$$
\omega_p(f, \varepsilon_1 + \varepsilon_2)\le
\omega_p(f, \varepsilon_1) + \omega_p(f, \varepsilon_2),
\quad  \varepsilon_1, \varepsilon_2>0.
$$
A function $f\in L^p(\mathbb{R}^n)$ belongs to the class $B^\alpha_{p,\theta}(\mathbb{R}^n)$
if and only if the quantity
$$
\|f\|_{\alpha, p, \theta}:=\biggl(\int_0^{+\infty}\bigl[s^{-\alpha}\omega_p(f,s)\bigr]^\theta s^{-1}ds\biggr)^{1/\theta}
$$
is finite.
We define the Besov norm of a function $f$ by the equality
$$
\|f\|_{B^\alpha_{p, \theta}(\mathbb{R}^n)} := \|f\|_p + \|f\|_{\alpha, p, \theta}.
$$

Our equivalent characterization
of Besov spaces is based on a new modulus of continuity
which is equivalent to $\omega_p(f, \cdot)$ and
provides the known characterization (\ref{eq1})
in the case of $\theta = \infty$.
For a function $f\in L^p(\mathbb{R}^n)$ we introduce
$$
\sigma_p(f, \varepsilon):=\sup\Bigl\{ \int_{\mathbb{R}^n} {\rm div}\Phi(x) f(x) dx,\,
\Phi\in C^\infty_0(\mathbb{R}^n, \mathbb{R}^n),
\|{\rm div}\Phi\|_{\frac{p}{p-1}}\le1, \|\Phi\|_{\frac{p}{p-1}}\le\varepsilon\Bigr\}.
$$
The first main result of the present paper
asserts the equivalence of $\omega_p(f, \cdot)$ and $\sigma_p(f, \cdot)$:
for any function $f\in L^p(\mathbb{R}^n)$, one has
$$
2^{-1}\omega_p(f, 2\varepsilon)\le \sigma_p(f, \varepsilon)
\le 6n\, \omega_p(f, \varepsilon).
$$
Actually,
the function $\sigma_p(f, \cdot)$ has
appeared implicitly
in the new definition of Nikolskii--Besov spaces
formulated above, since
condition (\ref{eq1}) can be reformulated in the following way:
$$
\sup_{s\ge0}s^{-\alpha}\sigma_p(f, s)<\infty.
$$
So, this is the desired modulus of continuity.
The above equivalence also shows
that a function $f\in L^p(\mathbb{R}^n)$ belongs to the Besov
space $B^\alpha_{p,\theta}(\mathbb{R}^n)$ if and only if
\begin{equation}\label{besov}
\Bigl(\int_0^\infty \bigl[s^{-\alpha}\sigma_p(f, s)\bigr]^\theta s^{-1}ds\Bigr)^{1/\theta}
<\infty.
\end{equation}

To illustrate how our approach to the fractional
smoothness in terms of the modulus of continuity $\sigma_p(f, \cdot)$
is related to the already known results,
in Section \ref{sect3} we propose the new proof of the
classical Ulyanov-type embedding theorems
by means of the function $\sigma_p(f, \cdot)$.
We recall that in his seminal works \cite{UlIzv}, \cite{UlMSb} P.L.~Ulyanov
obtained the following embedding theorem.

{\bf Theorem.}
For a function $f\in L^1[0,1]$ set
$$
w_1(f,\varepsilon):= \sup_{0\le h\le \varepsilon} \int_0^{1-h}|f(t+h) - f(t)|\, dt
$$
Then for any nondecreasing function $U\colon[0, \infty)\to[0, \infty)$
 the following implications hold:
$$
{\rm(i)}\, \sum_{n=1}^\infty [U(n+1) - U(n)]w_1(f, 1/n)<\infty \Rightarrow \int_0^1 |f(t)|U(|f(t)|)\, dt<\infty;
$$
$$
{\rm (ii)}\, \sum_{n=1}^\infty n^{-2} U(36 nw_1(f, 1/n))<\infty \Rightarrow \int_0^1 U(|f(t)|)\, dt<\infty.
$$
Actually in the same works  embedding theorems into  $L^r$-spaces were obtained,
but here we discuss only the stated results as examples of embedding theorems.
The multidimensional case was considered in papers \cite{Kol1}, \cite{Kol2} and \cite{Kol3},
where the author obtained
necessary and sufficient conditions for such type of embeddings.
The main method used by P.L. Ulyanov himself and by other researchers
in subsequent investigations  of such embedding theorems
is based on the so-called equimeasurable rearrangements of functions
(see \cite{Kol3} for a discussion of the method).
However, in Section \ref{sect3} we employ another approach,
based on the properties of the function $\sigma_p(f, \cdot)$,
and obtain similar simple
sufficient conditions for embeddings into the classes $LU(L)$ and $U(L)$.
Actually, our conditions are a kind of
integral form of Ulyanov's conditions
stated above and are similar to the multidimensional results
\cite[Theorem 1]{Kol1} and \cite[Corollary 4.2]{Kol3},
which are slightly weaker
than necessary and sufficient conditions \cite[Theorem 5]{Kol2} and \cite[Theorem 4.4]{Kol3}.
The main idea in Section \ref{sect3} is to estimate the integral
$$
\int_A |f(x)|^p dx
$$
of a function
$f\in L^p(\mathbb{R}^n)$ over a Borel set $A$ in terms of Lebesgue
measure of this set $A$. Substituting  $A=\{|f|\ge s\}$
we can estimate the behavior of the function $f$
on sets of large values, which
is already sufficient to prove embedding theorems we are interested in.
The definition of the function $\sigma_p(f, \cdot)$ states that
$$
\int_{\mathbb{R}^n} {\rm div}\Phi(x)f(x)\, dx\le \sigma_p(f, r)
$$
for smooth vector fields $\Phi$ with $\|\Phi\|_{\frac{p}{p-1}}\le r$ and $\|{\rm div}\Phi\|_{\frac{p}{p-1}}\le1$.
Taking $\Phi = \nabla\varphi$,
solving the Poisson equation ${\rm div}\nabla\varphi = \Delta\varphi = u$,
estimating $\nabla\varphi$ in terms of $u$, and taking the supremum over
functions $u$ with $\|u\|_{L^q(A)}=1$ we obtain the necessary bound.

Finally, in Section \ref{sect4}, we proceed to Besov classes on locally convex spaces
endowed with  centered Gaussian measures.
In paper \cite{BKP}, Nikolskii--Besov classes on a Gaussian space were introduced
by means of relation (\ref{eq1}) as the definition, where in place of the
divergence operator on  $\mathbb{R}^n$
 the Gaussian divergence operator ${\rm div}_\gamma$ was used.
If we consider the standard Gaussian measure $\gamma_n$
on $\mathbb{R}^n$, which is the measure with density
$(2\pi)^{-n/2}\exp(-|x|^2/2)$,
then
$${\rm div}_{\gamma_n}\Phi = \sum_{i=1}^n (\partial_i\Phi_i - x_i\Phi_i)
= {\rm div}\Phi - \langle x, \Phi\rangle.
$$
In this paper we propose a similar approach (see Definitions \ref{D4.1} and \ref{D4.2})
to general Besov classes $B^\alpha_{p,\theta}(\gamma)$
with respect to a Gaussian measure $\gamma$.
The first main result of Section \ref{sect4} (presented in Theorem \ref{t4.1})
provides an equivalent characterization of the introduced Besov classes
in terms of ``shifts'' on the Gaussian space,
which is similar in a sense to the classical definition
of Besov spaces on  $\mathbb{R}^n$.
Namely,  the function $f\in L^p(\gamma)$ with $p>1$
belongs to the Besov class $B^\alpha_{p,\theta}(\gamma)$
if and only if the quantity
$$
\Bigl(\int_0^\infty \Bigl[t^{-\alpha/2}\Bigl(\iint |f(e^{-t}x+ \sqrt{1- e^{-2t}}y) - f(x)|^p\,
\gamma(dx)\gamma(dy)\Bigr)^{1/p}\Bigr]^\theta t^{-1}dt\Bigr)^{1/\theta}
$$
is finite.
This theorem can be also viewed as an analog of Theorem 3.2 from \cite{AMP}.
The second main result of this section is the embedding theorem for Gaussian Besov classes.
We recall (see for example \cite{GM} and \cite{LedLS})
that for an arbitrary function $f$ from the Gaussian Sobolev space $W^{2,1}(\gamma)$
the following logarithmic Sobolev inequality holds:
$$
\int f^2\ln(|f|\|f\|_2^{-1})d\gamma\le \int|\nabla f|^2d\gamma.
$$
For the Sobolev class $W^{1,1}(\gamma)$ there is also an embedding theorem of logarithmic type.
Namely, the space $W^{1,1}(\gamma)$ is continuously embedded into the Orlicz space
 $L\log L^{1/2}$,
which is defined by the condition
 $$\int |f|[\ln(1+|f|)]^{1/2}d\gamma<\infty$$
(see \cite{LedIGA} and \cite{FH} for the case of functions of bounded variation).
Both  results mean that a smoothness of a function provides
some higher order of integrability.
One may wonder whether  this effect remains in force
for the Besov smoothness condition introduced in the present paper.
Theorem \ref{t4.2} is aimed to answer this question. It
asserts that, for any $\alpha\in(0,1)$, $\beta\in(0,\alpha)$,
$p\in (1,\infty)$, and $\theta\in[1, \infty]$, there is a constant
$C = C(p, \theta, \alpha, \beta)$ such that
for all functions $f\in B^\alpha_{p, \theta}(\gamma)$ one has
$$
\Bigl(\int|f|^p\bigl|\ln(|f|\|f\|_p^{-1})\bigr|^{p\beta/2} \, d\gamma\Bigr)^{1/p}
\le
C\|f\|_{B^\alpha_{p, \theta}(\gamma)}.
$$
The main idea of the proof of this result is in spirit of the semigroup approach
to the isoperimetric inequality on the Gaussian space proposed by M. Ledoux
in \cite{Led} and \cite{LedIGA}.
Similarly to the cited works, we use the short time behavior of
the Ornstein-Uhlenbeck semigroup on functions from the Besov class
and the hypercontractivity property of the Ornstein--Uhlenbeck semigroup.
At the end of the paper we provide an estimate of the best approximation
of a function from $L^2(\gamma)$ by Hermite polynomials in terms of the introduced
Gaussian modulus of continuity.

Throughout the paper we assume that $\alpha$ is a fixed number from $(0,1]$.
Given $p\in[1,\infty]$,  we denote by $q$ the dual number
such that $1/p+1/q=1$.
The $L^p$-norm of a function $f$ with respect to a measure $\mu$ is defined
as usual by
$$
\|f\|_p := \|f\|_{L^p(\mu)} = \Bigl(\int |f|^p\, d\mu\Bigr)^{1/p},\, p\in[1,\infty),
$$
and the limiting case of $p=\infty$ is treated also as usual.
In Sections \ref{sect1} and \ref{sect3} the measure $\mu$ will be
the standard Lebesgue measure on $\mathbb{R}^n$,
but in Section \ref{sect4} the measure $\mu$ will be a centered
Gaussian measure on a locally convex space.
We denote the space
of all infinitely differential functions with compact support
on $\mathbb{R}^n$ by $C_0^\infty(\mathbb{R}^n)$ and
the space of all bounded infinitely
differential functions with bounded derivatives of every order
is denoted by $C_b^\infty(\mathbb{R}^n)$.

\section{Besov classes on $\mathbb{R}^n$}\label{sect1}

This section is devoted to obtaining a new characterization
of Besov classes on $\mathbb{R}^n$
in terms of the moduli of continuity $\sigma_p(f, \cdot)$ and $\widetilde{\sigma}_p(f, \cdot)$.

Let $|\cdot|$ denote the standard Euclidean norm on $\mathbb{R}^n$
generated by the standard Euclidean inner product $\langle\cdot, \cdot\rangle$.
Let $\lambda^n$ be the standard Lebesgue measure on $\mathbb{R}^n$.
We also need the heat semigroup
$P_t$ on $\mathbb{R}^n$, which is defined by the equality
$$
P_tf(x) := (2\pi t)^{-n/2}\int_{\mathbb{R}^n}f(y)\exp\biggl(-\frac {|x-y|^2}{2t}\biggr)\, dy, \quad f\in L^1(\mathbb{R}^n).
$$

We start with the following key definitions (recall that $q=p/(p-1)$).

\begin{definition}\label{D1.1} Let $f\in L^p(\mathbb{R}^n)$.
Set
$$
\sigma_p(f, \varepsilon):=\sup\Bigl\{ \int_{\mathbb{R}^n} {\rm div}\Phi(x) f(x)\, dx:\,
\Phi\in C^\infty_0(\mathbb{R}^n, \mathbb{R}^n), \|{\rm div}\Phi\|_q\le1, \|\Phi\|_q\le\varepsilon\Bigr\}.
$$
\end{definition}

\begin{definition}\label{D1.3} Let $f\in L^p(\mathbb{R}^n)$.
Set
$$
\widetilde{\sigma}_p(f, \varepsilon):=\sup\Bigl\{ \int_{\mathbb{R}^n} \partial_e\varphi(x) f(x)\, dx:\,
\varphi\in C^\infty_0(\mathbb{R}^n), \|\partial_e\varphi\|_q\le1, \|\varphi\|_q\le\varepsilon\Bigr\}.
$$
\end{definition}

We now obtain several properties of the introduced functions.

\begin{lemma}\label{lem1.1}
For any function $f\in L^p(\gamma)$,
the functions
$\sigma_p(f, \cdot)$ and $\widetilde{\sigma}_p(f, \cdot)$
are nondecreasing, subadditive, concave and continuous on $(0, +\infty)$.
\end{lemma}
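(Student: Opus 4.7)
The plan is to derive all four properties of both $\sigma_p(f, \cdot)$ and $\widetilde\sigma_p(f, \cdot)$ from the single observation that each is the supremum of a linear functional over a convex, balanced family of admissible test fields whose two defining norm constraints depend linearly on $\varepsilon$. Monotonicity is then the trivial remark that the admissible family grows with $\varepsilon$. The genuinely useful ingredient is concavity, from which both subadditivity and continuity will fall out automatically, without any need for smooth approximation or duality.

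To prove concavity on $(0,\infty)$, I would fix $t\in[0,1]$ and $\varepsilon_1,\varepsilon_2>0$, pick near-maximizers $\Phi_i$ for $\sigma_p(f,\varepsilon_i)$, and consider the convex combination $\Phi:=t\Phi_1+(1-t)\Phi_2$. Minkowski's inequality applied to the $L^q$-norms of $\Phi$ and of $\mathrm{div}\,\Phi$ shows that $\Phi$ is admissible for $\sigma_p(f,t\varepsilon_1+(1-t)\varepsilon_2)$, namely $\|\mathrm{div}\,\Phi\|_q\le 1$ and $\|\Phi\|_q\le t\varepsilon_1+(1-t)\varepsilon_2$. Since $\Phi\mapsto\int\mathrm{div}\,\Phi\cdot f\,dx$ is linear, I obtain
$$
\sigma_p(f,t\varepsilon_1+(1-t)\varepsilon_2)\ge t\sigma_p(f,\varepsilon_1)+(1-t)\sigma_p(f,\varepsilon_2).
$$
The identical argument, with $\Phi$ replaced by the scalar $\varphi$ and $\mathrm{div}\,\Phi$ by $\partial_e\varphi$, gives concavity of $\widetilde\sigma_p(f,\cdot)$.

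Subadditivity will follow by extending the definition to $\varepsilon=0$ via $\sigma_p(f,0):=0$. This extension is consistent: a continuous $\Phi\in C_0^\infty$ with $\|\Phi\|_q=0$ is identically zero, so only $\Phi\equiv 0$ is admissible, and the integral then vanishes. The concavity argument above still applies at the endpoint by taking $\Phi_1\equiv 0$. For $a,b>0$, applying concavity to the coefficients $a/(a+b)$ and $b/(a+b)$ with endpoints $0$ and $a+b$ yields $\sigma_p(f,a)\ge \tfrac{a}{a+b}\sigma_p(f,a+b)$, and analogously for $b$; adding these two inequalities gives exactly $\sigma_p(f,a+b)\le \sigma_p(f,a)+\sigma_p(f,b)$. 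Finally, continuity on $(0,\infty)$ is free of charge: any concave function on an open interval is locally Lipschitz, hence continuous. The only subtlety to verify carefully is that the zero field is a legitimate admissible choice at $\varepsilon=0$, which is straightforward; otherwise no step should present any real obstacle.
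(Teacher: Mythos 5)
Your proposal is correct and follows essentially the same route as the paper: concavity is obtained exactly as in the paper's proof, by taking convex combinations of admissible fields and using linearity of $\Phi\mapsto\int \mathrm{div}\,\Phi\cdot f\,dx$ together with the triangle inequality for the two $L^q$-constraints, and continuity is then deduced from concavity. The only (harmless) divergence is that you derive subadditivity from concavity plus the normalization $\sigma_p(f,0)=0$, whereas the paper treats it as immediate --- presumably via the direct splitting of an admissible field $\Phi$ at level $a+b$ into $\frac{a}{a+b}\Phi$ and $\frac{b}{a+b}\Phi$, admissible at levels $a$ and $b$ respectively --- and both arguments are valid.
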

\begin{proof}
We consider only the function $\sigma_p(f, \cdot)$, since for the second
one the proof is essentially the same.
It is readily seen that this function is indeed nondecreasing and subadditive.
We now check that it is concave. Let $a, b>0$, and $t\in(0, 1)$. Then
for an arbitrary pair of vector fields $\Phi_1, \Phi_2\in C^\infty_0(\mathbb{R}^n, \mathbb{R}^n)$
with $\|{\rm div}\Phi_1\|_q\le1, \|\Phi_1\|_q\le a$
and $\|{\rm div}\Phi_2\|_q\le1, \|\Phi_2\|_q\le b$
we have
$$
t\int_{\mathbb{R}^n} {\rm div}\Phi_1(x) f(x)\, dx + (1-t)\int_{\mathbb{R}^n} {\rm div}\Phi_2(x) f(x)\, dx
=
\int_{\mathbb{R}^n} {\rm div}[t\Phi_1(x) + (1-t)\Phi_2(x)]f(x)\, dx
$$
and
$\|{\rm div}[t\Phi_1 + (1-t)\Phi_2]\|_q\le1$, $\|t\Phi_1 + (1-t)\Phi_2\|_q\le ta + (1-t)b$.
Thus,
$$
t\sigma_p(f, a) + (1-t)\sigma_p(f, b)\le \sigma_p(f, ta+(1-t)b).
$$
The concavity implies the continuity.
\end{proof}

Let $\sigma$ be a concave, nondecreasing and nonnegative function on $(0, +\infty)$.
Let us  introduce the ``adjoint'' function
$$
\sigma^*(s):= s\sigma(s^{-1}).
$$
In particular, we can consider
$$
\sigma_p^*(f, s):= s\sigma_p(f, s^{-1}), \quad \widetilde{\sigma}_p^*(f, s):= s\widetilde{\sigma}_p(f, s^{-1}).
$$

\begin{lemma}\label{lem1.2}
Let $\sigma$ be a concave, nondecreasing and nonnegative function on $(0, +\infty)$.
Then the function $\sigma^*$
is also concave and nondecreasing on $(0, +\infty)$.
If, in addition, we assume that $\lim_{t\to0}t^{-1}\sigma(t)=\infty$, then
the function $\sigma^*$ is strictly monotone.
\end{lemma}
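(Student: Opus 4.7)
The plan is to treat the three conclusions (monotonicity, concavity, strict monotonicity) separately, each of which reduces to an elementary convexity manipulation.

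First I would handle \emph{monotonicity}. Writing $u=1/s$, one has $\sigma^*(s)=\sigma(u)/u$, so nondecreasingness of $\sigma^*$ in $s$ is the same as nonincreasingness of the slope function $u\mapsto \sigma(u)/u$. Since $\sigma$ is concave and monotone bounded below by $0$ on $(0,+\infty)$, the limit $\sigma(0^+)\ge 0$ exists. For $0<u_1<u_2$, write $u_1=(u_1/u_2)u_2+(1-u_1/u_2)\cdot 0$ and apply concavity:
$$
\sigma(u_1)\ge \frac{u_1}{u_2}\sigma(u_2)+\Bigl(1-\frac{u_1}{u_2}\Bigr)\sigma(0^+)\ge \frac{u_1}{u_2}\sigma(u_2),
$$
which gives $\sigma(u_1)/u_1\ge \sigma(u_2)/u_2$, as desired.

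Next, for \emph{concavity} of $\sigma^*$, I would prove the defining inequality directly. Given $s_1,s_2>0$ and $\lambda\in(0,1)$, set $s=\lambda s_1+(1-\lambda)s_2$ and observe the identity
$$
\frac{1}{s}=\frac{\lambda s_1}{s}\cdot\frac{1}{s_1}+\frac{(1-\lambda)s_2}{s}\cdot\frac{1}{s_2},
$$
whose coefficients are nonnegative and sum to $1$. Concavity of $\sigma$ applied to this convex combination, followed by multiplication through by $s$, yields
$$
s\sigma(1/s)\ge \lambda s_1\sigma(1/s_1)+(1-\lambda)s_2\sigma(1/s_2),
$$
that is, $\sigma^*(s)\ge \lambda\sigma^*(s_1)+(1-\lambda)\sigma^*(s_2)$. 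This is the perspective transform trick, and it is the cleanest step in the proof.

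Finally, for \emph{strict monotonicity} under the assumption $\lim_{t\to 0^+}t^{-1}\sigma(t)=+\infty$, I would argue by contradiction. Since $\sigma^*(s)=\sigma(1/s)/(1/s)$, the hypothesis says $\sigma^*(s)\to+\infty$ as $s\to+\infty$. Suppose $\sigma^*$ is constant on some interval $[a,b]$ with $a<b$. By the concavity just proved together with the nondecreasing property, for any $c>b$ the point $b$ lies in the segment $[a,c]$, and the standard chord argument (plugging $b=\tfrac{c-b}{c-a}a+\tfrac{b-a}{c-a}c$ into the concavity inequality and using $\sigma^*(a)=\sigma^*(b)$) forces $\sigma^*(c)\le \sigma^*(b)$; combined with monotonicity this gives $\sigma^*(c)=\sigma^*(b)$ for all $c\ge a$, contradicting $\sigma^*(s)\to+\infty$. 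Hence no such interval exists and $\sigma^*$ is strictly increasing. The only mildly nontrivial point is this last contradiction argument; everything else is essentially bookkeeping with Jensen's inequality.
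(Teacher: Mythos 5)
Your proof is correct and follows essentially the same route as the paper: the same convex-combination identity for $1/s$ gives both monotonicity and concavity, and the same chord-plus-limit contradiction gives strict monotonicity (the paper merely makes your use of $\sigma(0^+)$ explicit by writing the combination with a point $\varepsilon>0$ and letting $\varepsilon\to0$, since $0$ is not in the domain).
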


\begin{proof}
Let $s, t\in (0, +\infty)$ and $s>t$. Then $1/s= (t/s)1/t + (1-t/s)0$.
Due to the concavity of the function $\sigma$ we have
$$
\sigma((t/s)1/t + (1-t/s)\varepsilon)\ge (t/s)\sigma(1/t) + (1-t/s)\sigma(\varepsilon) \ge (t/s)\sigma(1/t).
$$
Due to the continuity of the function $\sigma$,
taking the limit as $\varepsilon\to 0$ in the above estimate, we have
$\sigma(1/s)\ge (t/s)\sigma(1/t)$ implying
$\sigma^*(s)\ge\sigma^*(t)$.

Let again $s>t>0$ and let $\varkappa\in(0,1)$.
We note that
$$
\frac{1}{\varkappa s + (1-\varkappa) t} = \frac{\varkappa s} {\varkappa s + (1-\varkappa) t} 1/s + \frac{(1-\varkappa) t}{\varkappa s + (1-\varkappa) t}1/t.
$$
Thus, by the concavity of the function $\sigma$ one has
$$
\sigma((\varkappa s + (1-\varkappa) t)^{-1})\ge \frac{\varkappa s} {\varkappa s + (1-\varkappa) t}\sigma(1/s) + \frac{(1-\varkappa) t}{\varkappa s + (1-\varkappa) t}\sigma(1/t)
$$
and $\sigma^*(\varkappa s + (1-\varkappa) t)\ge \varkappa\sigma^*(s) + (1-\varkappa)\sigma^*(t)$, i.e. $\sigma^*$ is concave.

Assume that there are two points $s>t$
such that $\sigma^*(s) = \sigma^*(t)$. Then, by the concavity, for any point $r>s$ one has
$$
\sigma^*(r) = \sigma^*\biggl(\frac{r-t}{s-t} s + \Bigl(1-\frac{r-t}{s-t}\Bigr)t\biggr)\le
\frac{r-t}{s-t} \sigma^*(s) + \Bigl(1-\frac{r-t}{s-t}\Bigr)\sigma^*(t) = \sigma^*(s).
$$
Thus,
$$
\lim_{r\to\infty}r\sigma(1/r)=\lim_{r\to\infty}\sigma^*(r)\le\sigma^*(s)
$$
which contradicts the condition $\lim_{t\to0}t^{-1}\sigma(t)=\infty$.
\end{proof}

\begin{corollary}\label{c1.1}
Let $f\in L^p(\mathbb{R}^n)$.
Then the functions $\sigma_p^*(f, \cdot)$
and
$\widetilde{\sigma}_p^*(f, \cdot)$
are concave and nondecreasing on $(0, +\infty)$.
If, in addition, we assume that $\lim_{t\to0}t^{-1}\sigma_p(f, t)=\infty$
(alternatively, $\lim_{t\to0}t^{-1}\widetilde{\sigma}_p(f, t)=\infty$), then
the function $\sigma_p^*(f, \cdot)$ ($\widetilde{\sigma}_p^*(f, \cdot)$, respectively)
is strictly monotone.
\end{corollary}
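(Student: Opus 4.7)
The plan is to view this corollary as a direct specialization of Lemma \ref{lem1.2} to the two specific moduli of continuity at hand. To apply that lemma we need to check that $\sigma_p(f, \cdot)$ and $\widetilde{\sigma}_p(f, \cdot)$ meet its hypotheses, namely that they are concave, nondecreasing, and nonnegative on $(0, +\infty)$. Concavity and monotonicity are already supplied by Lemma \ref{lem1.1}, and nonnegativity is immediate from the definitions: in Definition \ref{D1.1} the trivial vector field $\Phi\equiv 0$ satisfies $\|\Phi\|_q = 0 \le \varepsilon$ and $\|\mathrm{div}\,\Phi\|_q = 0 \le 1$, contributing the value $0$ to the supremum, so $\sigma_p(f, \varepsilon)\ge 0$; the same argument with $\varphi\equiv 0$ handles $\widetilde{\sigma}_p(f, \varepsilon)$.

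With the hypotheses verified, I would invoke the first conclusion of Lemma \ref{lem1.2} applied in turn to $\sigma = \sigma_p(f, \cdot)$ and $\sigma = \widetilde{\sigma}_p(f, \cdot)$. This yields directly that the two ``adjoint'' functions
$$
\sigma_p^*(f, s) = s\,\sigma_p(f, s^{-1}), \qquad \widetilde{\sigma}_p^*(f, s) = s\,\widetilde{\sigma}_p(f, s^{-1})
$$
are concave and nondecreasing on $(0, +\infty)$, which is the first assertion of the corollary.

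For the second assertion, the additional hypothesis $\lim_{t\to 0} t^{-1}\sigma_p(f, t) = \infty$ (respectively for $\widetilde{\sigma}_p(f, \cdot)$) is precisely the limit condition appearing in the second part of Lemma \ref{lem1.2}. Thus the strict monotonicity of $\sigma_p^*(f, \cdot)$ (respectively $\widetilde{\sigma}_p^*(f, \cdot)$) follows at once from that lemma.

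I do not expect any real obstacle here: the corollary is a routine packaging of the abstract Lemma \ref{lem1.2} into the notation of Definitions \ref{D1.1} and \ref{D1.3}. The only point that requires a brief word, as above, is the nonnegativity of the two moduli, which is not explicitly recorded in Lemma \ref{lem1.1} but is obvious from the admissibility of the zero test function.
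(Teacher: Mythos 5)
Your proposal is correct and is exactly the derivation the paper intends: the corollary is the specialization of Lemma \ref{lem1.2} to $\sigma_p(f,\cdot)$ and $\widetilde{\sigma}_p(f,\cdot)$, whose concavity and monotonicity come from Lemma \ref{lem1.1}. Your extra remark on nonnegativity (via the zero test field) is a sensible, if minor, completion of the hypothesis check.
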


We now proceed to the main result of this section showing the equivalence
of $\sigma_p(f, \cdot)$, $\widetilde{\sigma}_p(f, \cdot)$,
and $\omega_p(f, \cdot)$.
We start with the following technical lemma
(the proof is similar to the proof of Theorem 3.4 from \cite{BKP}).

\begin{lemma}\label{t1.1}
For any function $f\in L^p(\mathbb{R}^n)$ one has
$$
2^{-1}\|f_{2h} - f\|_p\le \widetilde{\sigma}_p(f, |h|)\le \sigma_p(f, |h|)
\le (2\pi)^{-n/2} \int_{\mathbb{R}^n}
\|f_{|h|z}-f\|_p (1+|z|)e^{-\frac{|z|^2}{2}}\, dz.
$$
\end{lemma}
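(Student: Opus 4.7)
The lemma packages three inequalities, which I would prove separately.

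The middle inequality $\widetilde\sigma_p(f,|h|)\le\sigma_p(f,|h|)$ is essentially by definition: given any scalar $\varphi\in C_0^\infty(\mathbb{R}^n)$ admissible in Definition \ref{D1.3} and any unit vector $e$, the vector field $\Phi:=\varphi e$ lies in $C_0^\infty(\mathbb{R}^n,\mathbb{R}^n)$ with $\mathrm{div}\,\Phi=\partial_e\varphi$, so the supremum defining $\widetilde\sigma_p$ is taken over a subclass of the one defining $\sigma_p$.

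For the lower bound $\tfrac12\|f_{2h}-f\|_p\le\widetilde\sigma_p(f,|h|)$, I would invoke $L^p$--$L^q$ duality and test against $g\in C_0^\infty$ with $\|g\|_q\le1$. After the change of variables $y=x-2h$, one has $\int(f_{2h}-f)g\,dx=\int f(y)(g(y+2h)-g(y))\,dy$; writing $e=h/|h|$, the difference is recognized as $\partial_e\varphi(y)$ with $\varphi(y):=\int_0^{2|h|}g(y+se)\,ds$. Minkowski's integral inequality gives $\|\varphi\|_q\le 2|h|$ and translation invariance of the $L^q$-norm gives $\|\partial_e\varphi\|_q=\|g(\cdot+2h)-g\|_q\le 2$, so $\varphi/2$ is admissible in Definition \ref{D1.3}; a standard density argument to pass from $g\in C_0^\infty$ to general $g\in L^q$ finishes this part.

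The main point is the upper bound $\sigma_p(f,|h|)\le (2\pi)^{-n/2}\int\|f_{|h|z}-f\|_p(1+|z|)e^{-|z|^2/2}\,dz$, which I would obtain by a heat-semigroup splitting in the spirit of Theorem~3.4 of \cite{BKP}. For $\Phi$ admissible in Definition \ref{D1.1} and $t=|h|^2$, decompose
$$\int\mathrm{div}\,\Phi\cdot f\,dx=\int\mathrm{div}\,\Phi\cdot(f-P_tf)\,dx+\int\mathrm{div}\,\Phi\cdot P_tf\,dx.$$
For the first piece, the pointwise formula $P_tf(x)-f(x)=(2\pi)^{-n/2}\int(f(x-\sqrt t z)-f(x))e^{-|z|^2/2}\,dz$ together with Minkowski's integral inequality and H\"older (using $\|\mathrm{div}\,\Phi\|_q\le1$) produces the $1$ in the factor $(1+|z|)$. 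For the second piece, I integrate by parts to get $-\int\Phi\cdot\nabla P_tf\,dx$; differentiating the heat kernel under the integral sign and subtracting $f(x)$ (legitimate because $\int z_i e^{-|z|^2/2}dz=0$) yields the representation $\partial_i P_tf(x)=t^{-1/2}(2\pi)^{-n/2}\int(f(x-\sqrt t z)-f(x))z_i e^{-|z|^2/2}\,dz$, so H\"older together with $\|\Phi\|_q\le|h|$ and the choice $\sqrt t=|h|$ produce the $|z|$-contribution. Summing the two bounds and taking the supremum over $\Phi$ gives exactly the claimed inequality.

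The only subtle points are the integration by parts and the differentiation under the integral for $P_tf$, but these are routine because $\Phi\in C_0^\infty$ and $f\in L^p$ forces $P_tf$ to be smooth with all derivatives bounded on compact sets; getting the constant $1$ (rather than a larger numerical factor) in front of $(1+|z|)$ is what forces the specific choice $\sqrt t=|h|$.
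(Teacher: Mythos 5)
Your proposal is correct and follows essentially the same route as the paper: the same reduction $\Phi=\varphi e$ for the middle inequality, the same construction $\psi(x)=\int_0^{\cdot}\varphi(x+se)\,ds$ with the bounds $\|\psi\|_q\le |h|\|\varphi\|_q$ and $\|\partial_e\psi\|_q\le 2\|\varphi\|_q$ for the lower bound, and the same heat-semigroup splitting $f=(f-P_tf)+P_tf$ with the mean-zero trick and the choice $\sqrt t=|h|$ for the upper bound. No gaps.
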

\begin{proof}
For every function $\varphi\in C_0^\infty(\mathbb{R}^n)$
and for every unit vector $e\in\mathbb{R}^n$
we can take $\Phi=e\phi$ and conclude that
$$
\widetilde{\sigma}_p(f, \varepsilon)\le \sigma_p(f, \varepsilon).
$$
Let now $e = |h|^{-1}h$. For an arbitrary function
$\varphi\in C_0^\infty(\mathbb{R}^n)$ with $\|\varphi\|_q \le1$
we can write
\begin{multline*}
\int_{\mathbb{R}^n} \varphi(x)(f_h(x)-f(x))\, dx
=
\int_{\mathbb{R}^n} [\varphi(x+h)-\varphi(x)] f(x)\, dx
\\
=
\int_{\mathbb{R}^n} \int_0^{|h|}\partial_e\varphi(x+se)\, ds f(x)\, dx.
\end{multline*}
For the function
$$
\psi(x)=\int_0^{|h|}\varphi(x+se)\, ds \in C_0^\infty(\mathbb{R}^n)
$$
we have $\|\psi\|_q \le |h|\|\varphi\|_q\le |h|$
and $\|\partial_e\psi\|_q\le 2\|\varphi\|_q\le 2$,
since
$$
|\partial_e\psi(x)|=\biggl|\int_0^{|h|}\partial_e\varphi(x+se)\, ds\biggr|
=|\varphi(x+h)-\varphi(x)|.
$$
Thus,
$$
\int_{\mathbb{R}^n} \varphi(x)(f_h(x)-f(x))dx=\int_{\mathbb{R}^n}\partial_e\psi(x)f(x)\, dx\le
2\widetilde{\sigma}_p(f, |h|/2).
$$
Taking the supremum over functions $\varphi$,
we get the estimate $\|f_h - f\|_p\le2\widetilde{\sigma}_p(f, |h|/2)$.

Finally,
for every smooth
vector field $\Phi\in C_0^\infty(\mathbb{R}^n, \mathbb{R}^n)$
we can write
\begin{equation}\label{eq2}
\int_{\mathbb{R}^n} \mathrm{div}\Phi(x) f(x)\, dx =
\int_{\mathbb{R}^n} \mathrm{div}\Phi(x) (f(x) - P_tf(x))\, dx + \int_{\mathbb{R}^n} \mathrm{div}\Phi(x) P_tf(x)\, dx.
\end{equation}
We note that
$$
\|f-P_tf\|_p\le
(2\pi)^{-n/2} \int_{\mathbb{R}^n}
\|f_{\sqrt{t}z}-f\|_p e^{-\frac{|z|^2}{2}}\, dz.
$$
Thus, for the first term in equality (\ref{eq2})
we have
$$
\int_{\mathbb{R}^n} \mathrm{div}\Phi(x) (f(x) - P_tf(x))\, dx \le \|\mathrm{div}\Phi\|_q\|f - P_tf\|_p\le
\|\mathrm{div}\Phi\|_q(2\pi)^{-n/2} \int_{\mathbb{R}^n}
\|f_{\sqrt{t}z}-f\|_p e^{-\frac{|z|^2}{2}}\, dz.
$$
Integrating by parts in the second term of equality (\ref{eq2}), we have
\begin{multline*}
\int_{\mathbb{R}^n} \mathrm{div}\Phi(x) P_tf(x)\, dx = - \int_{\mathbb{R}^n}  \langle \Phi(x), \nabla P_tf(x)\rangle\, dx
\\
= t^{-1/2}\int_{\mathbb{R}^n}  \int_{\mathbb{R}^n}  \langle \Phi(x), (x-y)t^{-1/2}\rangle f(y)(2\pi t)^{-n/2}e^{-\frac{|x-y|^2}{2t}}\, dy\, dx
\\
= t^{-1/2}\int_{\mathbb{R}^n}  \int_{\mathbb{R}^n}  \langle \Phi(x), z\rangle f(x - \sqrt{t}z)(2\pi)^{-n/2}e^{-\frac{|z|^2}{2}}\, dz\, dx.
\end{multline*}
Since
$$\int f(x) \int \langle \Phi(x), z\rangle e^{-\frac{|z|^2}{2}}\,dz\, dx = 0,$$
 the above expression is equal to
\begin{multline*}
t^{-1/2}\int_{\mathbb{R}^n}  (2\pi)^{-n/2}e^{-\frac{|z|^2}{2}} \int_{\mathbb{R}^n}  \langle \Phi(x), z\rangle (f(x - \sqrt{t} z) - f(x))\, dx\, dz
\\
\le
t^{-1/2}\|\Phi\|_q(2\pi)^{-n/2}\int_{\mathbb{R}^n}|z|e^{-\frac{|z|^2}{2}} \|f_{\sqrt{t} z} - f\|_p\, dz
\end{multline*}
Thus, we have
\begin{multline*}
\int_{\mathbb{R}^n} \mathrm{div}\Phi(x) f(x)\, dx\le
\|\mathrm{div}\Phi\|_q(2\pi)^{-n/2} \int_{\mathbb{R}^n}
\|f_{\sqrt{t}z}-f\|_p e^{-\frac{|z|^2}{2}}\, dz
\\
+
t^{-1/2}\|\Phi\|_q(2\pi)^{-n/2}\int_{\mathbb{R}^n}|z| \|f_{\sqrt{t} z} - f\|_p e^{-\frac{|z|^2}{2}}\, dz.
\end{multline*}
Hence
$$
\sigma_p(f, \varepsilon)\le
(2\pi)^{-n/2} \int_{\mathbb{R}^n}
\|f_{\sqrt{t}z}-f\|_p e^{-\frac{|z|^2}{2}}\, dz
+
t^{-1/2}\varepsilon(2\pi)^{-n/2}\int_{\mathbb{R}^n}|z| \|f_{\sqrt{t} z} - f\|_p e^{-\frac{|z|^2}{2}}\, dz
$$
and taking $\sqrt{t} = \varepsilon$ we conclude that
$$
\sigma_p(f, \varepsilon)\le
(2\pi)^{-n/2} \int_{\mathbb{R}^n}(1+|z|)
\|f_{\varepsilon z}-f\|_p e^{-\frac{|z|^2}{2}}\, dz.
$$
The lemma is proved.
\end{proof}

As we have already mentioned in the introduction,
the function $\omega_p(f, \cdot)$ is nondecreasing and subadditive,
in particular,
\begin{equation}\label{est0}
\omega_p(f, \tau s)\le 2\tau \omega_p(f, s)
\end{equation}
for $\tau\ge1$ and $s>0$.
Indeed, let $k\in\mathbb{N}$ be a number such that $k\le \tau< k+1$.
Then
$$
\omega_p(f, \tau s)\le \omega_p(f, (k+1)s)\le (k+1)\omega_p(f, s)= k(1+1/k)\omega_p(f, s)\le2\tau\omega_p(f, s).
$$

Now we are ready to prove the aforementioned equivalence.

\begin{theorem}\label{T3.1}
For any function $f\in L^p(\mathbb{R}^n)$, we have
$$
2^{-1}\omega_p(f, 2\varepsilon)\le
\widetilde{\sigma}_p(f, \varepsilon)\le \sigma_p(f, \varepsilon)
\le 2(1 + \sqrt{n} + n)\omega_p(f, \varepsilon).
$$
\end{theorem}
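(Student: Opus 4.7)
The plan is to derive all three inequalities essentially as corollaries of Lemma \ref{t1.1}. The middle inequality $\widetilde{\sigma}_p(f,\varepsilon)\le\sigma_p(f,\varepsilon)$ is already recorded in that lemma (it also follows directly from the definitions by choosing $\Phi=e\varphi$), so the only work is the two outer inequalities.

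For the left bound $2^{-1}\omega_p(f,2\varepsilon)\le\widetilde{\sigma}_p(f,\varepsilon)$, I would use the first estimate in Lemma \ref{t1.1}, which says $2^{-1}\|f_{2h}-f\|_p\le\widetilde{\sigma}_p(f,|h|)$ for every $h\in\mathbb{R}^n$. Applying this for arbitrary $h$ with $|h|\le\varepsilon$ and using the monotonicity of $\widetilde{\sigma}_p(f,\cdot)$ established in Lemma \ref{lem1.1}, I get $\|f_{2h}-f\|_p\le 2\widetilde{\sigma}_p(f,\varepsilon)$. Taking the supremum over such $h$ (equivalently, over $h'=2h$ with $|h'|\le2\varepsilon$) yields $\omega_p(f,2\varepsilon)\le 2\widetilde{\sigma}_p(f,\varepsilon)$.

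For the right bound $\sigma_p(f,\varepsilon)\le 2(1+\sqrt{n}+n)\omega_p(f,\varepsilon)$, I would start from the heat-semigroup estimate in Lemma \ref{t1.1}, namely
$$
\sigma_p(f,\varepsilon)\le(2\pi)^{-n/2}\int_{\mathbb{R}^n}\|f_{\varepsilon z}-f\|_p(1+|z|)e^{-|z|^2/2}\,dz,
$$
and then replace $\|f_{\varepsilon z}-f\|_p$ by a multiple of $\omega_p(f,\varepsilon)$. The key sublemma is that by subadditivity and monotonicity of $\omega_p(f,\cdot)$,
$$
\omega_p(f,\varepsilon|z|)\le(1+|z|)\omega_p(f,\varepsilon),
$$
which follows by writing $|z|\le\lfloor|z|\rfloor+1$ and iterating subadditivity. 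Plugging this in gives
$$
\sigma_p(f,\varepsilon)\le\omega_p(f,\varepsilon)\cdot(2\pi)^{-n/2}\int_{\mathbb{R}^n}(1+|z|)^2 e^{-|z|^2/2}\,dz=\omega_p(f,\varepsilon)\bigl(1+2\mathbb{E}|Z|+\mathbb{E}|Z|^2\bigr),
$$
where $Z$ is standard Gaussian on $\mathbb{R}^n$. Using $\mathbb{E}|Z|^2=n$ and $\mathbb{E}|Z|\le\sqrt{n}$ (Jensen), the factor is at most $(1+\sqrt{n})^2=1+2\sqrt{n}+n\le 2(1+\sqrt{n}+n)$, which is the target constant.

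There is no real obstacle; the only delicate point is choosing the right subadditivity bound for the $L^p$-modulus of continuity. The looser estimate \eqref{est0} of the form $\omega_p(f,\tau s)\le 2\tau\omega_p(f,s)$ for $\tau\ge 1$ would give a slightly worse constant, so I would instead invoke the sharper $\omega_p(f,\tau s)\le(1+\tau)\omega_p(f,s)$ valid for all $\tau\ge0$, since this is exactly what matches the stated $2(1+\sqrt{n}+n)$ after the Gaussian integration.
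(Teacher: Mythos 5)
Your proof is correct and follows essentially the same route as the paper: both outer inequalities are read off from Lemma \ref{t1.1}, and the right-hand bound is obtained by dominating $\|f_{\varepsilon z}-f\|_p$ through the subadditivity and monotonicity of $\omega_p(f,\cdot)$ followed by Gaussian moment estimates. The only difference is that you use the uniform bound $\omega_p(f,\tau s)\le(1+\tau)\omega_p(f,s)$ instead of the paper's split into $|z|\le1$ and $|z|>1$ combined with estimate \eqref{est0}; this is valid and in fact yields the slightly sharper constant $(1+\sqrt{n})^2\le 2(1+\sqrt{n}+n)$.
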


\begin{proof}
The first two inequalities are straightforward corollaries of Lemma \ref{t1.1}.
For the last one, by the same lemma, we have
\begin{multline*}
\sigma_p(f, \varepsilon)\le
(2\pi)^{-n/2} \int_{\mathbb{R}^n}
\|f_{\varepsilon z}-f\|_p (1+|z|)e^{-\frac{|z|^2}{2}}\, dz
\le
(2\pi)^{-n/2} \int_{\mathbb{R}^n}
\omega_p(f, \varepsilon|z|) (1+|z|)e^{-\frac{|z|^2}{2}}\, dz
\\
=
(2\pi)^{-n/2} \int_{|z|\le1}
\omega_p(f, \varepsilon|z|) (1+|z|)e^{-\frac{|z|^2}{2}}\, dz
+
(2\pi)^{-n/2} \int_{|z|>1}
\omega_p(f, \varepsilon|z|) (1+|z|)e^{-\frac{|z|^2}{2}}\, dz.
\end{multline*}
The first integral above is estimated by
$$
\omega_p(f, \varepsilon)(2\pi)^{-n/2} \int_{|z|\le1}
(1+|z|)e^{-\frac{|z|^2}{2}}\, dz
\le
2\omega_p(f, \varepsilon)
$$
by the monotonicity of the function $\omega_p(f, \cdot)$.
The second integral, by estimate (\ref{est0}), is not greater than
$$
\omega_p(f, \varepsilon)(2\pi)^{-n/2} \int_{|z|>1}
2|z|(1+|z|)e^{-\frac{|z|^2}{2}}\, dz
\le
\omega_p(f, \varepsilon)(2\sqrt{n} + 2n).
$$
Combining these two estimates we get the announced bound.
\end{proof}

As a corollary of the above theorem we get an equivalent
characterization of Besov classes on $\mathbb{R}^n$.
Let us introduce the following notation.

\begin{definition}\label{D1.2}
Let $f\in L^p(\mathbb{R}^n)$, $p\in[1,\infty)$, $\theta\in[1,\infty]$, and $\alpha\in(0,1)$.
Set
$$
V^{p, \theta, \alpha}(f) = \Bigl(\int_0^\infty \bigl[s^{-\alpha}\sigma_p(f, s)\bigr]^\theta s^{-1}\, ds\Bigr)^{1/\theta}.
$$
\end{definition}

\begin{definition}\label{D1.4}
Let $f\in L^p(\mathbb{R}^n)$, $p\in[1,\infty)$, $\theta\in[1,\infty]$, and $\alpha\in(0,1)$.
Set
$$
\widetilde{V}^{p, \theta, \alpha}(f) =
\Bigl(\int_0^\infty \bigl[s^{-\alpha}\widetilde{\sigma}_p(f, s)\bigr]^\theta s^{-1}\, ds\Bigr)^{1/\theta}.
$$
\end{definition}

\begin{corollary}\label{T1.2}
For any function $f\in L^p(\mathbb{R}^n)$, the following conditions are equivalent:

$\rm(i)$ $f\in B^\alpha_{p, \theta}(\mathbb{R}^n)$;

$\rm(ii)$ $V^{p,\theta, \alpha}(f)<\infty$;

$\rm(iii)$ $\widetilde{V}^{p, \theta, \alpha}(f)<\infty$.

Moreover,
$$
2^{\alpha-1}\|f\|_{\alpha, p, \theta}\le \widetilde{V}^{p, \theta, \alpha}(f)\le V^{p,\theta, \alpha}(f)
\le 2(1 + \sqrt{n} + n)\|f\|_{\alpha, p, \theta}.
$$
\end{corollary}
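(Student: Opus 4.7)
The proof is essentially a weighted integration of the pointwise inequalities supplied by Theorem \ref{T3.1}. Once those bounds are in hand, all three claims follow by monotonicity of the Lebesgue integral (or of the essential supremum, in the case $\theta=\infty$), together with one change of variables on the logarithmic measure $s^{-1}\,ds$, which is scale-invariant.

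First I would dispatch the easy middle bound $\widetilde V^{p,\theta,\alpha}(f)\le V^{p,\theta,\alpha}(f)$: since $\widetilde\sigma_p(f,s)\le \sigma_p(f,s)$ for every $s>0$, the desired inequality is immediate after raising to the power $\theta$, multiplying by $s^{-\alpha\theta-1}$, and integrating. Similarly, the upper bound $V^{p,\theta,\alpha}(f)\le 2(1+\sqrt n+n)\|f\|_{\alpha,p,\theta}$ follows by applying the rightmost inequality of Theorem \ref{T3.1} pointwise in $s$ and integrating against $s^{-\alpha\theta-1}\,ds$; the constant $2(1+\sqrt n+n)$ pulls out of the integral. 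In the case $\theta=\infty$ the $L^\theta(s^{-1}ds)$ norm is the essential supremum over $s>0$, and the same monotonicity argument applies.

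The only step requiring a little care is the lower bound. From Theorem \ref{T3.1} we have $\omega_p(f,2\varepsilon)\le 2\,\widetilde\sigma_p(f,\varepsilon)$. I would raise this to the power $\theta$ and integrate against $\varepsilon^{-\alpha\theta-1}\,d\varepsilon$, then make the substitution $s=2\varepsilon$. Because the measure $\varepsilon^{-1}d\varepsilon = s^{-1}ds$ is invariant under this dilation, the substitution only produces the factor $(s/2)^{-\alpha\theta}=2^{\alpha\theta}s^{-\alpha\theta}$, and combining with the factor $2^{-\theta}$ coming from the inequality itself yields
\[
\|f\|_{\alpha,p,\theta}^\theta
=\int_0^\infty\bigl[s^{-\alpha}\omega_p(f,s)\bigr]^\theta s^{-1}\,ds
\le 2^{(1-\alpha)\theta}\,\widetilde V^{p,\theta,\alpha}(f)^\theta,
\]
i.e.\ $2^{\alpha-1}\|f\|_{\alpha,p,\theta}\le \widetilde V^{p,\theta,\alpha}(f)$; for $\theta=\infty$ the same substitution applied to the sup over $\varepsilon$ gives the same constant.

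Finally, the equivalence of (i), (ii) and (iii) is just a reading of the resulting chain of inequalities, since $f\in B^\alpha_{p,\theta}(\mathbb{R}^n)$ means $\|f\|_{\alpha,p,\theta}<\infty$ by the recalled equivalent definition. I do not anticipate any real obstacle; the only point to watch is to verify the $\theta=\infty$ case in parallel, and to note that the scale invariance of $s^{-1}ds$ is what makes the constant $2^{\alpha-1}$ (rather than a worse one) come out cleanly from the dyadic rescaling.
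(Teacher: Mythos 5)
Your proposal is correct and is exactly the argument the paper intends: the corollary is stated as an immediate consequence of Theorem \ref{T3.1}, obtained by integrating the pointwise chain of inequalities against $s^{-\alpha\theta-1}\,ds$ and using the dilation invariance of $s^{-1}ds$ in the substitution $s=2\varepsilon$, which produces precisely the constants $2^{\alpha-1}$ and $2(1+\sqrt n+n)$. Your handling of the $\theta=\infty$ case and the deduction of the equivalence of (i)--(iii) from the two-sided bounds are likewise exactly what is needed.
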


\section{Ulyanov embedding theorems}\label{sect3}
In this section we study embedding theorems
by means of the obtained properties of the function $\sigma_p(f, \cdot)$.

Our first goal is to estimate the measure of the set $\{|f|\ge t\}$.
To provide such an estimate we  need the following lemma.

\begin{lemma}\label{lem2.1}
Let $f\in L^p(\mathbb{R}^n)$, $p\in[1,n)$ or $n=p=1$,
and let $u\in C_0^\infty(\mathbb{R}^n)$ be a function such that
$\|u\|_q\le 1$ (recall that $q=p/(p-1)$). Then
$$
\int u(x) f(x) dx
\le C(n, p)\sigma_p\bigl(f,\|u\|_1^{p/n}\bigr),
$$
where $C(n, p) = 1+\nu_n^{-1/p}(n/p-1)^{1/p-1}$,
 $\nu_n$ is the surface area of the unit sphere in $\mathbb{R}^n$,
and $C(1,1) = 1$.
\end{lemma}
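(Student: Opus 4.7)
The plan, following the sketch in the introduction, is to realize $u$ as a divergence $u=\mathrm{div}\,\Phi$ for a smooth vector field $\Phi$ and then invoke Definition \ref{D1.1}. A natural choice is $\Phi=\nabla\varphi$ where $\varphi$ solves the Poisson equation $\Delta\varphi=u$. For $n\ge 2$ the Newtonian potential yields $\varphi\in C^\infty(\mathbb{R}^n)$ satisfying the pointwise bound
$$
|\nabla\varphi(x)|\le \frac{1}{\nu_n}\int_{\mathbb{R}^n}\frac{|u(y)|}{|x-y|^{n-1}}\,dy,
$$
while in the case $n=1$ one has $\varphi'(x)=\tfrac12\int_{\mathbb{R}}\mathrm{sgn}(x-y)u(y)\,dy$, whence $|\varphi'(x)|\le\tfrac12\|u\|_1$.

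The core estimate is $\|\nabla\varphi\|_q\le C(n,p)\|u\|_1^{p/n}$, given $\|u\|_q\le 1$. For $n\ge 2$ I would set $R:=\|u\|_1^{p/n}$ and split the pointwise Riesz-type bound into the parts over $\{|x-y|\le R\}$ and $\{|x-y|>R\}$. Minkowski's integral inequality handles the near part,
$$
\Bigl\|\int_{|x-y|\le R}|u(y)||x-y|^{-(n-1)}\,dy\Bigr\|_q \le \|u\|_q\int_{|y|\le R}|y|^{-(n-1)}\,dy=\nu_n R\|u\|_q\le\nu_n\|u\|_1^{p/n}.
$$
For the far part Minkowski diverges, so I would instead apply H\"older's inequality to the factorization $|u(y)|=|u(y)|^{1/p}|u(y)|^{1/q}$ and then use Fubini; the hypothesis $p<n$ is precisely what makes the radial integral $\int_R^\infty r^{-(n-1)q+n-1}\,dr$ convergent, and the calculation produces an $L^q$-bound of the form $c_{n,p}\|u\|_1R^{-(n-p)/p}=c_{n,p}\|u\|_1^{p/n}$. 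Summing the two pieces and dividing by $\nu_n$ yields a bound of the required shape $C(n,p)\|u\|_1^{p/n}$. The case $n=p=1$ is immediate from $|\varphi'|\le\tfrac12\|u\|_1$ together with $\|u\|_q=\|u\|_\infty\le 1$.

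Once this estimate is in place, set $\Phi=\nabla\varphi$: then $\mathrm{div}\,\Phi=u$, giving $\|\mathrm{div}\,\Phi\|_q=\|u\|_q\le 1$ and $\|\Phi\|_q\le C(n,p)\|u\|_1^{p/n}$. By Definition \ref{D1.1} combined with the elementary inequality $\sigma_p(f,cs)\le c\,\sigma_p(f,s)$ for $c\ge 1$ (immediate from concavity together with $\sigma_p(f,0)=0$; compare Lemma \ref{lem1.1}), one concludes $\int u f\,dx\le C(n,p)\sigma_p(f,\|u\|_1^{p/n})$.

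The principal obstacle is purely technical: the Newtonian potential $\varphi$ is smooth but generally not compactly supported, so $\Phi=\nabla\varphi$ does not lie in $C_0^\infty(\mathbb{R}^n,\mathbb{R}^n)$ as required by Definition \ref{D1.1}. I would resolve this by a cutoff argument---multiply $\varphi$ by a smooth cutoff $\chi_M$ equal to $1$ on $B(0,M)$ and supported in $B(0,2M)$, form $\Phi_M:=\nabla(\chi_M\varphi)\in C_0^\infty$, and let $M\to\infty$. Since $u$ has compact support and $\varphi$, $\nabla\varphi$ decay at infinity via standard Riesz potential estimates, the commutator terms involving $\nabla\chi_M$ and $\Delta\chi_M$ vanish in the limit, so $\|\mathrm{div}\,\Phi_M\|_q\to\|u\|_q$, $\|\Phi_M\|_q\to\|\nabla\varphi\|_q$, and $\int\mathrm{div}\,\Phi_M\cdot f\,dx\to\int uf\,dx$. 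This limiting procedure is the only genuinely non-computational step of the proof; everything else is an explicit kernel estimate.
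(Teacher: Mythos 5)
Your proposal is correct and follows essentially the same route as the paper: solve $\Delta\varphi=u$ via the Newtonian potential, bound $\|\nabla\varphi\|_q$ by splitting the Riesz kernel at $R=\|u\|_1^{p/n}$ (the paper applies Young's convolution inequality to get $\|K_1\|_1\|u\|_q+\|K_2\|_q\|u\|_1$, which is the same computation as your Minkowski plus H\"older--Fubini argument), and then use the monotonicity of $s\mapsto s\sigma_p(f,1/s)$ from Corollary \ref{c1.1}. The cutoff argument you spell out is exactly the step the paper compresses into the words ``by approximation''.
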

\begin{proof}
By approximation, for an arbitrary vector
field $\Phi\in C^\infty(\mathbb{R}^n)$ with $\|\Phi\|_q\le \varepsilon$,
$\|{\rm div}\Phi\|_q\le1$
one has
$$
\int {\rm div}\Phi(x)f(x) dx \le \sigma_p(f, \varepsilon).
$$
Assume first that $n>2$. Consider the function
$$
\varphi(x) = - (n-2)^{-1}\nu_n^{-1}\int_{\mathbb{R}^n} |x-y|^{-n+2}u(y)\, dy.
$$
It is known that $\mathrm{div}\nabla\varphi = \Delta \varphi = u$.
Set
$$
K_1(x) = |x|^{-n+1}\mathrm{Ind}_{\{|x|<R\}}(x), \quad K_2(x) = |x|^{-n+1}\mathrm{Ind}_{\{|x|\ge R\}}(x).
$$
Let us estimate $\nabla\varphi$:
$$
|\nabla\varphi(x)| \le \nu_n^{-1} \int_{\mathbb{R}^n} |x-y|^{-n+1}|u(y)|\, dy =
\nu_n^{-1}\bigl(K_1*|u| (x)+K_2*|u| (x)\bigr).
$$
Thus,
\begin{multline*}
\|\nabla\varphi\|_q\le\nu_n^{-1}\bigl(\|K_1\|_1\|u\|_q + \|K_2\|_q\|u\|_1\bigr)\\
\le
\nu_n^{-1}\int_{\{|x|<R\}}|x|^{-n+1}\, dx + \nu_n^{-1}\|u\|_1\biggl(\int_{\{|x|\ge R\}}|x|^{-nq+q}\, dx\biggr)^{1/q}\\
=
R + \nu_n^{-1/p}\bigl((n-1)(q-1)-1\bigr)^{-1/q}R^{1-n/p}\|u\|_1 =
R + \nu_n^{-1/p}q^{-1/q}(n/p-1)^{1/p-1}R^{1-n/p}\|u\|_1\\
\le
R + \nu_n^{-1/p}(n/p-1)^{1/p-1}R^{1-n/p}\|u\|_1
\end{multline*}
Setting now $R = \|u\|_1^{p/n}$, we obtain
$$
\|\nabla\varphi\|_q \le \bigl(1+\nu_n^{-1/p}(n/p-1)^{1/p-1}\bigr)\|u\|_1^{p/n}.
$$
Thus,
\begin{multline*}
\int u(x) f(x) dx = \int \mathrm{div}\nabla\varphi(x) f(x) dx \le
\sigma_p\bigl(f, \bigl(1+\nu_n^{-1/p}(n/p-1)^{1/p-1}\bigr)\|u\|_1^{p/n}\bigr)
\\
\le \bigl(1+\nu_n^{-1/p}(n/p-1)^{1/p-1}\bigr)\sigma_p\bigl(f,\|u\|_1^{p/n}\bigr),
\end{multline*}
where we have used the monotonicity of the function $\sigma_p^*(f, \cdot)$ (see Corollary \ref{c1.1}).
We have obtained the announced estimate in the case $n>2$.
For $n=2$ we can take
$$
\varphi(x) = - (2\pi)^{-1}\int \ln|x-y|u(y)dy
$$
and argue as above.

Thus, only the case $n=1$ remains.
In that case we consider the function
$\varphi(x) = \int_{-\infty}^x u(t)dt$.
For this function we can write
$$
\int u(x)f(x) dx = \int \varphi'(x)f(x) dx
\le
\sigma_1(f, \|\varphi\|_\infty)
\le
\sigma_1(f, \|u\|_1).
$$
The lemma is proved.
\end{proof}

\begin{corollary}\label{c2.1}
Let $f\in L^p(\mathbb{R}^n)$, $p\in[1,n)$ or $n=p=1$,
then
$$
\biggl(\int_A |f(x)|^p\, dx\biggr)^{1/p} \le C(n, p) \sigma_p\bigl(f, \bigl(\lambda^n(A)\bigr)^{1/n}\bigr)
$$
for an arbitrary Borel set $A$ in $\mathbb{R}^n$ with $C(n, p) = 1+\nu_n^{-1/p}(n/p-1)^{1/p-1}$,
where $\nu_n$ is the surface area of the unit sphere in $\mathbb{R}^n$,
and $C(1,1) = 1$.
\end{corollary}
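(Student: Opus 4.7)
The plan is to reduce the corollary to Lemma \ref{lem2.1} by $L^p$--$L^q$ duality combined with a mollification argument. I may assume $\lambda^n(A)<\infty$, since the general case then follows by exhausting $A$ with sets $A\cap B(0,k)$ of finite measure and invoking the monotonicity of $\sigma_p(f,\cdot)$ established in Lemma \ref{lem1.1}.

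By duality,
$$
\Bigl(\int_A|f(x)|^p\,dx\Bigr)^{1/p}=\sup\Bigl\{\int u(x)f(x)\,dx:\, u\in L^q(\mathbb{R}^n),\ \operatorname{supp} u\subset A,\ \|u\|_q\le 1\Bigr\}.
$$
For any such $u$, H\"older's inequality applied on $A$ yields
$$
\|u\|_1\le \bigl(\lambda^n(A)\bigr)^{1/p}\|u\|_q\le \bigl(\lambda^n(A)\bigr)^{1/p},
$$
so that $\|u\|_1^{p/n}\le \bigl(\lambda^n(A)\bigr)^{1/n}$. This is the one place where the exponent $p/n$ inside $\sigma_p$ gets matched up with $\lambda^n(A)^{1/n}$.

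To apply Lemma \ref{lem2.1} I next approximate such $u$ by functions in $C_0^\infty$. Choosing a smooth cutoff $\chi_R$ with $|\chi_R|\le 1$ and $\chi_R\equiv 1$ on $B(0,R)$, and a standard mollifier $\rho_\varepsilon$, I set $u_{R,\varepsilon}:=(\chi_R u)\ast\rho_\varepsilon\in C_0^\infty(\mathbb{R}^n)$. Young's inequality for convolutions preserves both normalizations,
$$
\|u_{R,\varepsilon}\|_q\le 1, \qquad \|u_{R,\varepsilon}\|_1\le \bigl(\lambda^n(A)\bigr)^{1/p},
$$
and Lemma \ref{lem2.1}, together with the monotonicity of $\sigma_p(f,\cdot)$, then gives
$$
\int u_{R,\varepsilon}(x)f(x)\,dx \le C(n,p)\,\sigma_p\bigl(f,\|u_{R,\varepsilon}\|_1^{p/n}\bigr)\le C(n,p)\,\sigma_p\bigl(f,\bigl(\lambda^n(A)\bigr)^{1/n}\bigr).
$$

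Finally, since $u_{R,\varepsilon}\to u$ in $L^q$ as $R\to\infty$ and $\varepsilon\to 0$ while $f\in L^p$, the left-hand side above tends to $\int u(x)f(x)\,dx$, and taking the supremum over admissible $u$ concludes the proof. The only mildly delicate point is maintaining both normalizations $\|\cdot\|_q\le 1$ and $\|\cdot\|_1\le (\lambda^n(A))^{1/p}$ simultaneously along the approximation; this is ensured by Young's inequality together with the pointwise bound $|\chi_R|\le 1$, so I do not foresee a real obstacle beyond the bookkeeping.
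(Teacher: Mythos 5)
Your argument is essentially the paper's own proof: both reduce the claim to Lemma \ref{lem2.1} via $L^p$--$L^q$ duality, use H\"older's inequality to get $\|u\|_1\le(\lambda^n(A))^{1/p}$, approximate $u$ by smooth compactly supported functions, and invoke the monotonicity of $\sigma_p(f,\cdot)$ before taking the supremum. Your variant of controlling $\|u_{R,\varepsilon}\|_1$ directly through Young's inequality (rather than passing the $\|\cdot\|_1$-dependence to the limit, as the paper does) is a slight simplification and is correct.

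The one genuine gap is the case $p=1$, where $q=\infty$: mollifications $(\chi_R u)\ast\rho_\varepsilon$ do \emph{not} in general converge to $u$ in $L^\infty$ (take $u$ an indicator function), so your final limiting step ``$u_{R,\varepsilon}\to u$ in $L^q$, hence $\int u_{R,\varepsilon}f\to\int uf$'' fails as stated. The fix is exactly what the paper does: for $p=1$ one only has $u_{R,\varepsilon}\to u$ almost everywhere with the uniform bound $\|u_{R,\varepsilon}\|_\infty\le 1$, and then $\int u_{R,\varepsilon}f\to\int uf$ follows from Lebesgue's dominated convergence theorem using $f\in L^1$. With that adjustment your proof is complete.
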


\begin{proof}
Assume first that $A$ is a bounded set, $A\subset B(0, R)$, where $B(0, R)$
is the ball of radius $R$ centered at the origin.
Consider a function $u\in L^q(A)$ with $\|u\|_{L^q(A)}\le1$.
There is a sequence of functions $u_m\in C_0^\infty(\mathbb{R}^n)$ such that
$\mathrm{supp}(u_m)\subset B(0, 2R)$, $\|u_m\|_q\le1$, and $u_m\to u$ in $L^q\bigl(B(0, 2R)\bigr)$
($\lambda^n$-a.e. in  case $p=1$), where
$u(x) = 0$ if $x\not\in A$. For example, such a sequence can
be constructed  by means of convolutions with
compactly supported smooth probability densities.
For each function $u_m$ by the previous lemma we have
$$
\int u_m(x) f(x)\, dx \le C(n, p)\sigma_p\bigl(f, \|u_m\|_1^{p/n}\bigr).
$$
Since $\|\cdot\|_1$ is a continuous function on the space $L^q\bigl(B(0, 2R)\bigr)$ for $p>1$
(or by  Lebesgue's dominated convergence theorem in case $p=1$),
the above estimate is also valid for the function $u$.
Thus, for every function $u\in L^q(A)$ with $\|u\|_{L^q(A)}\le1$ we have
$$
\int_A u(x) f(x)\, dx \le C(n, p) \sigma_p\bigl(f, \|u\|_1^{p/n}\bigr)
\le C(n, p) \sigma_p\bigl(f, \bigl(\lambda^n(A)\bigr)^{1/n}\bigr).
$$
Taking the supremum over all functions $u$ with $\|u\|_{L^q(A)}\le1$ we obtain the desired
estimate for bounded sets $A$. The case of an arbitrary set $A$
can be obtained by passing to the limit.
\end{proof}

We now proceed to embedding theorems, which we formulate in terms
of the function $\sigma_p(f, \cdot)$
instead of $\omega_p(f, \cdot)$, since these functions are equivalent by Theorem \ref{T3.1}.

\begin{theorem}\label{T3.2}
Let $f\in L^p(\mathbb{R}^n)$, where $p\in [1, n)$ or $n=p=1$.
Let $U\colon [0, \infty)\to[0,\infty)$ be a nondecreasing continuous
function.
Assume that there is a number $N>0$ such that
$$
\int_N^{+\infty}\bigl[\sigma_p\bigl(f, t^{-p/n}\bigr)\bigr]^p\, dU(t) <\infty,
$$
where the integral is understood in the Lebesgue--Stieltjes sense.
Then
$$
\int_{\mathbb{R}^n} |f(t)|^pU(|f(t)|\|f\|_p^{-1})\, dt <\infty.
$$
\end{theorem}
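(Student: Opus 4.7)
The plan is to rewrite the left-hand side by a layer-cake / Fubini argument against the Lebesgue--Stieltjes measure $dU$, and then on each level set apply Corollary~\ref{c2.1} combined with Chebyshev's inequality to estimate the truncated $L^p$-norm in terms of $\sigma_p(f,\cdot)$.

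First, I would reduce to the case $\|f\|_p>0$ (otherwise the claim is trivial) and, using continuity and monotonicity of $U$, write
$$
U(s) = U(0) + \int_0^s dU(t)
$$
for $s\ge 0$. Substituting $s=|f(x)|/\|f\|_p$ and swapping the order of integration by Fubini--Tonelli gives
$$
\int_{\mathbb{R}^n}|f(x)|^p U(|f(x)|\|f\|_p^{-1})\,dx
=
U(0)\|f\|_p^p + \int_0^\infty \Bigl(\int_{A_t}|f(x)|^p\,dx\Bigr)\,dU(t),
$$
where $A_t:=\{x\in\mathbb{R}^n:|f(x)|>t\|f\|_p\}$. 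This is the key manipulation: it replaces a pointwise weight by an integral over level sets.

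Next I would estimate the inner integral. Chebyshev's inequality gives
$$
\lambda^n(A_t)\le (t\|f\|_p)^{-p}\|f\|_p^p = t^{-p},
$$
so by the monotonicity of $\sigma_p(f,\cdot)$ and Corollary~\ref{c2.1},
$$
\Bigl(\int_{A_t}|f(x)|^p\,dx\Bigr)^{1/p}
\le C(n,p)\,\sigma_p\bigl(f,(\lambda^n(A_t))^{1/n}\bigr)
\le C(n,p)\,\sigma_p(f,t^{-p/n}).
$$
Of course we also have the trivial bound $\int_{A_t}|f|^p\,dx\le \|f\|_p^p$.

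Finally I would split $\int_0^\infty$ at the threshold $N$ from the hypothesis. On $[0,N]$, using the trivial bound, the contribution is at most $\|f\|_p^p\,(U(N)-U(0))$, which is finite since $U$ is a finite nondecreasing function. On $[N,\infty)$, the refined bound yields
$$
\int_N^\infty\Bigl(\int_{A_t}|f|^p\,dx\Bigr)dU(t)
\le C(n,p)^p\int_N^\infty\bigl[\sigma_p(f,t^{-p/n})\bigr]^p\,dU(t),
$$
which is finite by assumption. Combining the three pieces proves the theorem. The only mildly delicate point is to justify the Fubini step against $dU$ (and to confirm that $f\in L^p$ with $p<n$, together with the finiteness of the tail integral, makes all intermediate quantities measurable and finite); both are standard once one notes that the integrand is nonnegative and $U$ is continuous nondecreasing, so Tonelli applies directly.
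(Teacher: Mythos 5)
Your proposal is correct and follows essentially the same route as the paper: both arguments rest on Corollary~\ref{c2.1} applied to the level sets $A_t$, the Chebyshev bound $\lambda^n(A_t)\le t^{-p}$, the monotonicity of $\sigma_p(f,\cdot)$, and a Fubini--Tonelli exchange against the Lebesgue--Stieltjes measure $dU$. The only difference is cosmetic --- you decompose the target integral first and then split at $N$, while the paper integrates the key estimate over $[N,\infty)$ and bounds the resulting quantity from below by the target integral minus $U(N)\|f\|_p^p$ --- so no further comment is needed.
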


\begin{proof}
By Corollary \ref{c2.1} for the set $A_s:=\{|f|\ge \|f\|_ps\}$ we have
$$
\int_{A_s} |f(x)|^p\, dx \le C(n, p)^p\bigl[\sigma_p\bigl(f,(\lambda^n(A_s))^{1/n}\bigr)\bigr]^p
\le
C(n, p)^p \bigl[\sigma_p\bigl(f,s^{-p/n}\bigr)\bigr]^p,
$$
since $\lambda^n(A_s)\le s^{-p}$.
Integrating both sides of the above estimate from $N$ to $+\infty$ with respect to the locally bounded measure,
generated by the monotone function $U$, we get
$$
\int_N^{+\infty}\int_{A_s} |f(x)|^p\, dx\,  dU(s)\le
C(n, p)^p \int_N^{+\infty}\bigl[\sigma_p\bigl(f,s^{-p/n}\bigr)\bigr]^p\, dU(s)<\infty.
$$
We now deal with the left-hand side of the above estimate, which is equal to
\begin{multline*}
\int_{\mathbb{R}^n} I_{\{|f|\ge N\|f\|_p\}}|f(x)|^p\bigl[U(|f(x)|\|f\|_p^{-1}) - U(N)\bigr]\, dx
\\
=
\int_{\mathbb{R}^n} |f(x)|^pU(|f(x)|\|f\|_p^{-1})\, dx
-
\int_{\mathbb{R}^n} I_{\{|f|< N\|f\|_p\}}|f(x)|^pU(|f(x)|\|f\|_p^{-1})\, dx
\\
-
U(N)\int_{\mathbb{R}^n} I_{\{|f|\ge N\|f\|_p\}}|f(x)|^p\, dx
\ge
\int_{\mathbb{R}^n} |f(x)|^pU(|f(x)|\|f\|_p^{-1})\, dx
-
U(N)\|f\|_p^p.
\end{multline*}
Thus,
$$
\int_{\mathbb{R}^n} |f(x)|^pU(|f(x)|\|f\|_p^{-1})\, dx\le C(n, p)^p \int_N^{+\infty}\bigl[\sigma_p\bigl(f,s^{-p/n}\bigr)\bigr]^p\, dU(s) + U(N)\|f\|_p^p.
$$
The theorem is proved.
\end{proof}

We now proceed to the second theorem.
Let us introduce the following notation.
Let \mbox{$p\in[1, n)$} and let $f\in L^p(\mathbb{R}^n)$.
Consider the function
\begin{equation}\label{vstar}
v_p(f, t) = \sigma_p(f, t^{\frac{p}{n}}).
\end{equation}
Since the function $t\to t^{\frac{p}{n}}$ is concave,
the function $v_p(f, \cdot)$ is also concave and nondecreasing.
Moreover, $\lim_{t\to0} t^{-1}v_p(f, t) = \lim_{s\to0} s^{-n/p}\sigma_p(f, s)$.
We note that, by concavity,
$$
\sigma_p(f, s) = \sigma_p (f, s\cdot1 + (1-s)\cdot0)\ge s\sigma_p(f, 1).
$$
Thus, $\lim_{s\to0} s^{-n/p}\sigma(f, s) = \infty$ and
the function $v^*_p(f, t):= tv_p(f, t^{-1})$ is strictly increasing by Lemma \ref{lem1.2}.
To unify the notation, for the case $n=p=1$ we also use the symbol $v_p(f, \cdot)$,
which in this case coincides with $\sigma_p(f, \cdot)$.

\begin{lemma}\label{lem3.1}
Let $f\in L^p(\mathbb{R}^n)$, where $p\in[1,n)$ or $n=p=1$.
In the case $n=p=1$ we also assume that $\lim_{t\to0}t^{-1}\sigma_1(f, t)=\infty$.
Then
$$
\lambda^n(|f|\ge C(n,p)v_p^*(f, t))\le t^{-p}, \quad v_p^*(f, t) = t\sigma_p(f, t^{-\frac{p}{n}}),
$$
where $C(n, p) = 1+\nu_n^{-1/p}(n/p-1)^{1/p-1}$,
 $\nu_n$ is the surface area of the unit sphere in $\mathbb{R}^n$,
and $C(1,1) = 1$.
\end{lemma}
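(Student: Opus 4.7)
The plan is to combine Corollary \ref{c2.1}, applied to the super-level set $A = \{|f|\ge\alpha\}$, with the strict monotonicity of $v_p^*(f,\cdot)$ noted in the paragraph preceding the lemma.

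Fix $t>0$, set $\alpha := C(n,p)v_p^*(f,t)$, and let $m := \lambda^n(\{|f|\ge \alpha\})$. The goal is to show $m\le t^{-p}$. We may assume $m>0$, otherwise there is nothing to prove. Apply Corollary \ref{c2.1} to the Borel set $A_\alpha = \{|f|\ge\alpha\}$:
$$
\alpha\, m^{1/p} = \Bigl(\int_{A_\alpha}\alpha^p\,dx\Bigr)^{1/p}\le \Bigl(\int_{A_\alpha}|f(x)|^p\,dx\Bigr)^{1/p}\le C(n,p)\,\sigma_p\bigl(f, m^{1/n}\bigr).
$$

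Now introduce the auxiliary parameter $s$ by the rule $m = s^{-p}$, so that $m^{1/n} = s^{-p/n}$ and $m^{1/p} = s^{-1}$. Substituting and rearranging, the displayed inequality becomes
$$
\alpha \le C(n,p)\, s\,\sigma_p\bigl(f, s^{-p/n}\bigr) = C(n,p)\, v_p^*(f,s).
$$
Combining with $\alpha = C(n,p)v_p^*(f,t)$ yields $v_p^*(f,t)\le v_p^*(f,s)$.

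The conclusion now follows from the strict monotonicity of $v_p^*(f,\cdot)$. Indeed, in the paragraph preceding the lemma it is observed that $\lim_{t\to 0}t^{-1}v_p(f,t) = \lim_{s\to 0} s^{-n/p}\sigma_p(f,s) = \infty$ in the case $p<n$ (this uses the concavity estimate $\sigma_p(f,s)\ge s\sigma_p(f,1)$), while in the case $n=p=1$ the same condition is part of the hypothesis. Lemma \ref{lem1.2} therefore guarantees that $v_p^*(f,\cdot)$ is strictly increasing, so $v_p^*(f,t)\le v_p^*(f,s)$ forces $t\le s$, i.e., $m = s^{-p}\le t^{-p}$, as required.

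The only subtle point is making sure the monotonicity argument in the last step is available, so one should be careful to flag exactly where the hypotheses ($p<n$, or $n=p=1$ together with the blow-up condition on $\sigma_1(f,t)/t$) enter; all other manipulations are algebraic bookkeeping between the parameters $\alpha$, $m$, $s$, and $t$ encoded in the definition $v_p^*(f,t) = t\sigma_p(f,t^{-p/n})$.
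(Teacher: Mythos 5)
Your proof is correct and follows essentially the same route as the paper: apply Corollary \ref{c2.1} to the super-level set, rewrite the resulting inequality as $v_p^*(f,t)\le v_p^*(f,s)$ with $s=\lambda^n(A)^{-1/p}$, and conclude by the strict monotonicity of $v_p^*(f,\cdot)$ established before the lemma. The extra bookkeeping with $\alpha$, $m$, $s$ and the explicit flagging of where the hypotheses guarantee strict monotonicity are just a more verbose rendering of the paper's argument.
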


\begin{proof}
As we have already mentioned, the function $v^*$ is strictly monotone.
By Corollary \ref{c2.1}, for the set $A_t:= \{|f|\ge C(n ,p) v_p^*(f, t)\}$ we have
$$
C(n ,p)v_p^*(f, t)\lambda^n(A_t)^{1/p}\le
\biggl(\int_{A_t} |f(x)|^p\, dx\biggr)^{1/p}
\le C(n, p) \sigma_p\bigl(f, \bigl(\lambda^n(A_t)\bigr)^{1/n}\bigr).
$$
Thus,
$$
v_p^*(f, t)\le v_p^*\bigl(f, \bigl(\lambda^n(A_t)\bigr)^{-1/p}\bigr).
$$
By the strictly monotonicity of the function $v_p^*(f, \cdot)$ we have the estimate
$$
\lambda^n(A_t)\le t^{-p}
$$
which completes the proof.
\end{proof}

We now proceed to the theorem itself.

\begin{theorem}\label{T3.3}
Let $f\in L^p(\mathbb{R}^n)$, where $p\in[1,n)$ or $n=p=1$.
Let $U\colon [0, \infty)\to[0,\infty)$ be a strictly increasing
continuous function such that $U(0) = 0$, $\lim_{t\to\infty}U(t) = \infty$
and there are positive constants $a, r$ such that $U(t)\le at^p$ whenever $0<t<r$.
Assume that there is a number $N>0$ such that
$$
\int_N^{+\infty} t^{-1-p}U\bigl(C(n,p)t\sigma_p(f, t^{-p/n})\bigr)\, dt <\infty,
$$
where $C(n, p) = 1+\nu_n^{-1/p}(n/p-1)^{1/p-1}$,
$\nu_n$ is the surface area of the unit sphere in $\mathbb{R}^n$,
and $C(1,1) = 1$. Then
$$
\int_{\mathbb{R}^n} U(|f(x)|)\, dx<\infty.
$$
\end{theorem}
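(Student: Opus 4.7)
The plan is to combine the layer-cake representation of $\int U(|f|)\, dx$ with the distribution bound $\lambda^n(\{|f|\ge C(n,p)v_p^*(f,t)\})\le t^{-p}$ from Lemma \ref{lem3.1}, and then translate everything into the hypothesis integral via the substitution $s = C(n,p)v_p^*(f,t)$. Concretely, I would start from
$$
\int_{\mathbb{R}^n} U(|f(x)|)\, dx = \int_0^\infty \lambda^n(\{|f|>s\})\, dU(s),
$$
valid since $U$ is continuous, nondecreasing and vanishes at $0$, and split the outer integral at $M := \max\{r,\, C(n,p) v_p^*(f, N)\}$.

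The piece on $(0,M]$ equals $\int_{\mathbb{R}^n} U(\min(|f|, M))\, dx$ by Fubini, and is controlled by elementary means: on $\{|f|\le r\}$ use $U(|f|)\le a|f|^p$ and $f\in L^p$, while on $\{r<|f|\}$ use $U(\min(|f|,M))\le U(M)$ together with Chebyshev's inequality $\lambda^n(\{|f|>r\}) \le r^{-p}\|f\|_p^p$. The core of the argument is the piece on $(M,\infty)$: since $v_p^*(f,\cdot)$ is concave, strictly increasing, and tends to $\infty$ (as observed before Lemma \ref{lem3.1}), for each $s\ge M$ we may write $s = C(n,p)v_p^*(f, t)$ with $t\ge T_0 \ge N$. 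Lemma \ref{lem3.1} then yields $\lambda^n(\{|f|>s\}) \le t^{-p}$, and a Lebesgue--Stieltjes integration by parts with $\phi(t):= C(n,p)v_p^*(f, t)$ gives
$$
\int_M^\infty \lambda^n(\{|f|>s\})\, dU(s) \le \int_{T_0}^\infty t^{-p}\, dU(\phi(t)) = p\int_{T_0}^\infty t^{-p-1} U(\phi(t))\, dt - T_0^{-p} U(\phi(T_0)) + \lim_{T\to\infty} T^{-p} U(\phi(T)),
$$
whose surviving integral is dominated by the hypothesis integral because $\phi(t) = C(n,p)\, t\, \sigma_p(f, t^{-p/n})$.

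The main obstacle will be two technical points in the last display. First, justifying the Lebesgue--Stieltjes integration by parts when $\phi$ is merely concave (hence a.e.\ differentiable and of locally bounded variation, but not a priori $C^1$); this can be handled either by the general change-of-variable formula for monotone functions applied to $U\circ\phi$, or by approximating $\phi$ from below by smooth concave surrogates. Second, showing that the boundary term $T^{-p}U(\phi(T))$ vanishes as $T\to\infty$. Since $U\circ\phi$ is nondecreasing, one has
$$
\int_{T/2}^T t^{-p-1} U(\phi(t))\, dt \ge U(\phi(T/2)) \int_{T/2}^T t^{-p-1}\, dt = \frac{2^p - 1}{p}\, T^{-p}\, U(\phi(T/2)),
$$
and convergence of the hypothesis integral forces the right-hand side to $0$; rescaling $T\mapsto 2T$ then produces $T^{-p}U(\phi(T))\to 0$. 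Once these points are in hand, summing the three finite contributions yields $\int U(|f|)\, dx<\infty$.
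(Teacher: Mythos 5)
Your argument is, in substance, the paper's own proof with slightly different bookkeeping: the paper writes the layer cake as $\int_0^\infty \lambda^n(U(|f|)\ge t)\,dt$ and splits at $U(r)$ and $U(R)$ with $R=\zeta(N)=C(n,p)v_p^*(f,N)$, whereas you integrate $\lambda^n(|f|>s)\,dU(s)$ and split at $M$; these are related by the substitution $t=U(s)$. The low pieces are handled identically (the $U(t)\le at^p$ bound plus Chebyshev), and your integration by parts of $\int t^{-p}\,d(U\circ\phi)$ is exactly the paper's ``area under the graph of $[(U\circ\zeta)^{-1}(t)]^{-p}$'' computation, including the same treatment of the boundary term at infinity (the paper bounds $m[(U\circ\zeta)^{-1}(m)]^{-p}$ by a tail of the hypothesis integral, which is your $T^{-p}U(\phi(T))\to0$ argument). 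Your first ``technical obstacle'' is not really one: you integrate by parts against the smooth factor $t^{-p}$ and only need $U\circ\phi$ to be continuous, nondecreasing, hence locally of bounded variation, so no smoothness of $\phi$ is required.

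There is, however, one genuine gap: the sub-case $n=p=1$ with $A:=\lim_{t\to0}t^{-1}\sigma_1(f,t)<\infty$, which is allowed by the theorem's hypotheses. In that case Lemma \ref{lem3.1} does not apply (it explicitly assumes this limit is infinite when $n=p=1$), $v_p^*(f,\cdot)$ need not be strictly increasing, and $\phi(t)=C(1,1)v_1^*(f,t)$ stays bounded by $A$, so your claim that every $s\ge M$ can be written as $s=\phi(t)$ fails and the whole high-range substitution collapses. For $p<n$ the issue does not arise, since $v_p^*(f,t)=t\sigma_p(f,t^{-p/n})\ge t^{1-p/n}\sigma_p(f,1)\to\infty$, but for $n=p=1$ the exponent vanishes. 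The paper disposes of this case separately by noting that such an $f$ has bounded variation and satisfies $|f|\le A$ a.e., so $\int U(|f|)\,dx\le(a+r^{-1}U(A))\|f\|_1$; you would need to add this (easy) case to make the proof complete.
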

\begin{proof}
We first consider the case where either $n>1$ or $n=p=1$
and $\lim_{t\to0}t^{-1}\sigma_p(f, t)=\infty$.
Set $\zeta(s) = C(n, p)v_p^*(f, s)$, where $v^*$ is defined by equality (\ref{vstar}).
Under our assumptions, the function $\zeta(\cdot)$ is continuous and strictly increasing.
Set $R= \zeta(N)$.

Consider the functions $u_m$ such that $u_m(x) = U(|f(x)|)$ if $|x|\le m$ and $U(|f(x)|)\le m$
and $u_m(x) = 0$ at all other points $x$.
For these functions we have
\begin{multline*}
\int_{\mathbb{R}^n} u_m(x)\, dx = \int_0^m \lambda^n(u_m\ge t)\, dt
\le
\int_0^m \lambda^n(U(|f|)\ge t)\, dt
\\
= \int_0^{U(r)}\lambda^n(U(|f|)\ge t)\, dt +
\int_{U(r)}^{U(R)}\lambda^n(U(|f|)\ge t)\, dt+\int_{U(R)}^m\lambda^n(U(|f|)\ge t)\, dt.
\end{multline*}
For the first term we have
\begin{multline*}
\int_0^{U(r)}\lambda^n(U(|f|)\ge t)\, dt
=
\int_0^{U(r)}\lambda^n(|f|\ge U^{-1}(t))\, dt
\\
\le \int_0^{U(r)}\lambda^n(|f|\ge (a^{-1}t)^{1/p})\, dt
\le \int_0^\infty\lambda^n(a|f|^p\ge t)dt = a\int_{\mathbb{R}^n}|f(x)|^p\, dx,
\end{multline*}
where in the second inequality we have used that $a(U^{-1}(t))^p\ge t$ if $t\in(0, U(r))$.

For the second term we have
$$
\int_{U(r)}^{U(R)}\lambda^n(U(|f|)\ge t)\, dt\le (U(R) - U(r))\lambda^n(|f|\ge r)
\le
(U(R) - U(r))r^{-p}\int_{\mathbb{R}^n}|f(x)|^p\, dx.
$$
For the third term, by Lemma \ref{lem3.1}, we have
$$
\int_{U(R)}^m\lambda^n(U(|f|)\ge t)\, dt
=
\int_{U(R)}^m\lambda^n\Bigl(|f|\ge \zeta\bigl(\zeta^{-1}(U^{-1}(t))\bigr)\Bigr)\, dt
\le
\int_{U(R)}^m\bigr[\bigl(U\circ \zeta \bigr)^{-1}(t)\bigl]^{-p}\, dt.
$$
The last integral is the area under the graph of
the strictly decreasing function $\bigr[\bigl(U\circ \zeta\bigr)^{-1}(t)\bigl]^{-p}$.
Hence it is equal to
\begin{multline*}
\int\limits_{[(U\circ \zeta)^{-1}(m)]^{-p}}^{[\zeta^{-1}(R)]^{-p}} \bigl(U\circ \zeta\bigr)(t^{-1/p})\, dt
-
U(R)\Bigl([\zeta^{-1}(R)]^{-p} - [(U\circ \zeta)^{-1}(m)]^{-p}\Bigr)
\\
+
\bigl(m - U(R)\bigr)[(U\circ \zeta)^{-1}(m)]^{-p}.
\end{multline*}
The first integral in the above expression is equal to
$$
\int_{N}^{(U\circ \zeta)^{-1}(m)} s^{-1-p}U\bigl(\zeta(s)\bigr)\, ds\le\int_N^{+\infty} s^{-1-p}U\bigl(C(n,p)v_p^*(f, s)\bigr)\, ds<\infty,
$$
since $v_p^*(f, s)=sv_p(f, 1/s)=s\sigma_p(f, s^{-p/n})$.
We also note that
$$
m [(U\circ \zeta)^{-1}(m)]^{-p} \le p\int^{+\infty}_{(U\circ \zeta)^{-1}(m)}t^{-1-p}U\bigl(\zeta(t)\bigr)\, dt.
$$
Summing up these estimates and taking the limit as $m$ tends to infinity, by Fatou's lemma,
we get
\begin{multline}\label{ineq1}
\int_{\mathbb{R}^n} U(|f(x)|)\, dx
\le a\int_{\mathbb{R}^n}|f(x)|^p\, dx + (U(C(n,p)v_p^*(f, N)) - U(r))r^{-p}\int_{\mathbb{R}^n}|f(x)|^p\, dx
\\
+ \int_N^{+\infty} s^{-1-p}U\bigl(C(n,p)s\sigma_p(f, s^{-p/n})\bigr)\, ds.
\end{multline}
which completes the proof in the case under consideration.

In the case where $n=p=1$
and $\lim_{t\to0}t^{-1}\sigma_p(f, t)=A$ for some constant $A$
 (the limit exists by monotonicity)
the function $f$ has bounded variation and is bounded by the constant $A$.
Thus,
$$
\int_{\mathbb{R}} U(|f(x)|)\, dx \le a\int_{\{|f|<r\}}|f(x)|\, dx + U(A)\lambda(|f|\ge r)
\le (a+ r^{-1} U(A))\int_{\mathbb{R}} |f(x)|\, dx.
$$
The theorem is proved.
\end{proof}

\begin{remark}
{\rm
We note that the condition
$$
\int_N^{+\infty} t^{-1-p}U\bigl(C(n,p)t\sigma_p(f, t^{-p/n})\bigr)\, dt <\infty,
$$
is equivalent to the condition
$$
\int_{N'}^{+\infty} s^{-1-n}U\bigl(C(n,p)s^{n/p-1}\sigma^*_p(f, s))\bigr)\, ds<\infty
$$
which can be verified by the change of variables $t=s^{n/p}$.
}
\end{remark}

\begin{remark}
{\rm
Let us consider the case $n=p=1$ in Theorems \ref{T3.2} and \ref{T3.3}.
In this case the condition in the first theorem coincides with
$$
\int_N^{+\infty}\sigma_p\bigl(f, t^{-1}\bigr)\, dU(t) <\infty
$$
and the condition in the second one coincides with
$$
\int_N^{+\infty} t^{-2}U\bigl(t\sigma_p(f, t^{-1})\bigr)\, dt <\infty.
$$
Both conditions are integral forms of the classical
Ulyanov conditions from \cite{UlIzv} and \cite{UlMSb}, formulated in the introduction.
}
\end{remark}

\section{Besov classes on spaces with Gaussian measures}\label{sect4}

We now proceed to the infinite-dimensional Gaussian case.
Let $X$ be a real Hausdorff locally convex space
with  the topological dual space $X^*$.

We recall that a Borel measure $\gamma$ on $X$ is called Radon measure if
for every Borel set $B\subset X$ and every $\varepsilon>0$ there is
a compact set $K\subset B$ such that $\gamma(B\setminus K)<\varepsilon$.
We also recall that a Radon measure $\gamma$ on~$X$
is a centered Gaussian measure if,
for every continuous linear functional $l$
on $X$, the image measure $\gamma\circ l^{-1}$
is either  Dirac's measure at zero or has a density
of the form $(2\pi c^2)^{-1/2}\exp(-t^2/2c^2)$.
From now on let $\gamma$
be a Radon centered Gaussian measure on~$X$.

For a function $f\in L^p(\gamma)$ we set
$$
\|f\|_p := \|f\|_{L^p(\gamma)} := \biggl(\int_{X} |f|^p\, d\gamma\biggr)^{1/p}.
$$

Recall that the Cameron--Martin norm of a vector $h\in X$ is defined by
$$
|h|_H=\sup \biggl\{ l(h)\colon\, \int_X l^2\, d\gamma \le 1, \ l\in X^{*}\biggr\}.
$$
Let $H\subset X$ be the linear subspace of all vectors $h\in X$ such that $|h|_H<\infty$.
This subspace $H$ is called the Cameron--Martin space of the measure $\gamma$.
If $\gamma$ is the standard Gaussian measure on $\mathbb{R}^n$, then its
Cameron--Martin space is $\mathbb{R}^n$ itself and
if $\gamma$ is the countable power of the standard Gaussian measure on the real line,
then $H$ is the classical Hilbert space~$l^2$.
For a general Radon centered Gaussian measure, the Cameron--Martin space
is also a separable Hilbert space (see \cite[Theorem 3.2.7 and Proposition 2.4.6]{GM})
with the inner product $\langle\cdot,\cdot\rangle_H$ generated by the Cameron--Martin norm $|\cdot|_H$.

Let $\{l_i\}_{i=1}^\infty\subset X^*$ be an orthonormal basis in the closure $X_\gamma^*$
of the set $X^*$ in $L^2(\gamma)$.
There is an orthonormal basis $\{e_i\}_{i=1}^\infty$ in $H$ such that $l_i(e_j) = \delta_{i,j}$
(see \cite{GM}).
We will use below that for any orthonormal family $l_1,\ldots,l_n\in X_\gamma^*$
the distribution of the vector $(l_1, \ldots, l_n)$, i.e., the image of the measure~$\gamma$,
is the standard Gaussian measure $\gamma_n$ on $\mathbb{R}^n$, i.e., the measure with density
$(2\pi)^{-n/2}\exp(-|x|^2/2)$ with respect to the standard Lebesgue measure on $\mathbb{R}^n$.

Let $\mathcal{FC}^\infty(X)$ denote the set of all functions
$\varphi$ on $X$ of the form $\varphi(x) = \psi(l_1(x), \ldots, l_n(x))$,
where $\psi\in C_b^\infty(\mathbb{R}^n)$, $l_i\in X^*$, and let
$\mathcal{FC}_0^\infty(X)$ denote the set of all functions
$\varphi$ on $X$ of the form $\varphi(x) = \psi(l_1(x), \ldots, l_n(x))$,
where $\psi\in C_0^\infty(\mathbb{R}^n)$, $l_i\in X^*$.
Let $\mathcal{FC}^\infty(X, H)$ be the set of all vector fields $\Phi$ of the form
$$
\Phi(x) = \sum_{i=1}^n\Psi_i(g_1(x), \ldots, g_n(x))h_i,
$$
where $\Psi_i\in C_b^\infty(\mathbb{R}^n)$, $g_i\in X^*$, $h_i\in H$ and
let $\mathcal{FC}_0^\infty(X, H)$ be the subset of this class consisting of mappings for which
$\Psi_i$ can be chosen with compact support. Note that here we can actually take vectors
$h_i$ orthogonal in~$H$ and functionals $g_i$ orthogonal in~$X_\gamma^*$ such that $g_i(h_j)=\delta_{ij}$.
We will call such vectors and functionals biorthogonal.

For every $\varphi\in\mathcal{FC}^\infty(X)$ of the form
$\varphi(x) = \psi(l_1(x), \ldots, l_n(x))$ set
$$
\nabla\varphi(x) = \sum_{j=1}^n\partial_{x_j}\psi (l_1(x), \ldots, l_n(x)) e_j,
$$
where $\{l_i\}$ and $\{e_i\}$ are biorthogonal.
Let ${\rm div}_\gamma$ be the ``adjoint operator'' to the gradient operator $\nabla$
with respect to~$\gamma$, that is,
$$
\int_X ({\rm div}_\gamma\Phi) \varphi\, d\gamma = -\int_X \langle\Phi, \nabla\varphi\rangle_H\, d\gamma.
$$
for arbitrary $\Phi\in \mathcal{FC}^\infty(X, H)$ and $\phi\in \mathcal{FC}^\infty(X)$.
One can easily check that
$$
{\rm div}_\gamma\Phi (x) = \sum_{j=1}^n\partial_{x_j}\Psi_j(l_1(x), \ldots, l_n(x)) -
l_j(x)\Psi_j(l_1(x), \ldots, l_n(x))
$$
for a vector field $\Phi\in\mathcal{FC}^\infty(X, H)$ of the form
$$
\Phi(x) = \sum_{i=1}^n\Psi_i(l_1(x), \ldots, l_n(x))e_i
$$
with biorthogonal $\{l_i\}$ and $\{e_i\}$.
We note that
for a vector field $\Phi$ from $\mathcal{FC}_0^\infty(X, H)$
its divergence ${\rm div}_\gamma\Phi$ is a bounded function.

We recall that the Ornstein--Uhlenbeck semigroup is defined by the equality
$$
T_tf(x) := \int_{X} f(e^{-t}x+\sqrt{1-e^{-2t}}y)\, \gamma(dy)
$$
for any function $f\in L^1(\gamma)$.

We now fix an orthonormal basis $\{l_n\}\subset X^*$ in $X_\gamma^*$.
For any function $f\in L^1(\gamma)$ let $\mathbb{E}_nf$ be a function on $\mathbb{R}^n$ such that
$$
\int_{\mathbb{R}^n} \psi\mathbb{E}_nf\, d\gamma_n =
\int_X \psi\bigl(l_1(x), \ldots, l_n(x)\bigr) f(x)\, \gamma(dx)
\quad
\forall\, \psi\in C_b^\infty(\mathbb{R}^n),
$$
where $\gamma_n$ is the standard Gaussian measure on $\mathbb{R}^n$.
This equality actually means that the function $\mathbb{E}_nf(l_1,\ldots,l_n)$
is the conditional expectation of $f$ with respect to the
$\sigma$-field generated by functions $l_1,\ldots,l_n$.
By the known property of conditional expectations, for any function $f\in L^p(\gamma)$, we have
$$
\|f - \mathbb{E}_nf(l_1, \ldots, l_n)\|_p\to 0, \quad n\to\infty.
$$

We also introduce the following
functions $C(p)$ and $c_t$ to be used further:
$$
C(p) := \biggl((2\pi)^{-1/2}\int_{\mathbb{R}}|s|^pe^{-\frac{s^2}{2}}\, ds\biggr)^{1/p}\quad \text{and}\quad
c_t := \int_0^t\frac{e^{-\tau}}{\sqrt{1-e^{-2\tau}}}\, d\tau.
$$
We note that
$$
\frac{e^{-\tau}}{\sqrt{1-e^{-2\tau}}}\le (2t)^{-1/2},
$$
$c_t\le (2t)^{1/2}$, and $\lim\limits_{t\to\infty}c_t = \pi/2$.

\vskip .1in

Let us define the Gaussian modulus of continuity $\sigma_{\gamma, p}(f, \cdot)$
which plays the same role as the
 function $\sigma_p(f, \cdot)$ introduced above in case of $\mathbb{R}^n$.

\begin{definition}\label{D4.1} Let $f\in L^p(\gamma)$.
Set
$$
\sigma_{\gamma, p}(f, \varepsilon):=\sup\Bigl\{ \int {\rm div}_\gamma\Phi f d\gamma,\,
\Phi\in \mathcal{FC}_0^\infty(X, H), \|{\rm div}_\gamma\Phi\|_q\le1, \|\Phi\|_q\le\varepsilon\Bigr\}.
$$
\end{definition}

We note that the function $\sigma_{\gamma, p}(f, \cdot)$ is continuous, concave, and nondecreasing on $(0, +\infty)$,
which can be proved similarly to Lemma \ref{lem1.1}.
Thus, by approximation, in the definition of the
quantity $\sigma_{\gamma, p}(f, \varepsilon)$ the supremum can be taken over
all vector fields $\Phi\in \mathcal{FC}^\infty(X, H)$
with $\|{\rm div}_\gamma\Phi\|_q\le1, \|\Phi\|_q\le\varepsilon$.

Using the previous definition we can now introduce Besov classes
on a locally convex space endowed with a Gaussian measure.

\begin{definition}\label{D4.2} Let $\alpha\in(0,1]$, $p\in[1,\infty)$, $\theta\in[1,\infty]$.
We say that a function $f\in L^p(\gamma)$
belongs to the Gaussian Besov space $B_{p, \theta}^\alpha(\gamma)$
if the quantity
$$
V^{p, \theta, \alpha}(f) = \Bigl(\int_0^\infty \bigl[s^{-\alpha}\sigma_{\gamma, p}(f, s)\bigr]^\theta s^{-1}ds\Bigr)^{1/\theta}
$$
is finite.
\end{definition}

We note that in the case $\theta = \infty$ the above definition coincides with the definition
of the Gaussian Nikolskii--Besov class introduced in \cite{BKP}.

We will give an equivalent description of these Gaussian Besov classes
in terms of the following two characteristics.

\begin{definition}\label{D4.3}
For a function $f\in L^p(\gamma)$, $p\in[1,\infty)$,
set
$$
a_{\gamma, p}(f, t):= \Bigl(\iint |f(e^{-t}x + \sqrt{1- e^{-2t}}y) - f(x)|^p\, \gamma(dx)\gamma(dy)\Bigr)^{1/p}
$$
and
$$
A^{p, \theta, \alpha}_\gamma(f):=
\Bigl(\int_0^\infty \bigl[t^{-\alpha/2}a_{\gamma, p}(f, t)\bigr]^\theta t^{-1}dt\Bigr)^{1/\theta}.
$$
\end{definition}

We note that
$$
\|f-T_tf\|_p\le a_{\gamma, p}(f, t).
$$
In a sense, the function $a_{\gamma, p}(f, \cdot)$ can be regarded as a Gaussian replacement
for the finite-dimensional modulus of continuity $\omega_p(f, \cdot)$,
since we cannot directly use shifts $f_h$ of the function $f\in L^p(\gamma)$,
since these shifts can fail to be in $L^p(\gamma)$.

We need the following technical lemma.

\begin{lemma}\label{lem4.1}
Let $\gamma_n$ be the standard Gaussian measure on $\mathbb{R}^n$. Then
for any function \mbox{$f\in L^p(\gamma_n)$}, where $p\in[1, \infty)$, we have
$$
a_{\gamma_n, p}(f, t)\le
2\sigma_{\gamma_n, p}(f, 2^{-1}C(p)c_t).
$$
\end{lemma}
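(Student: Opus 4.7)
The plan is to reduce the statement to the case of a smooth test function $\varphi$ via duality in $L^q(\gamma_n\otimes\gamma_n)$,
$$
a_{\gamma_n, p}(f, t) = \sup_{\|\varphi\|_{L^q(\gamma_n\otimes\gamma_n)}\le 1}\iint \varphi(x, y)\bigl[f(e^{-t}x + \sqrt{1-e^{-2t}}y) - f(x)\bigr]\gamma_n(dx)\gamma_n(dy),
$$
and then, for each such $\varphi$, to construct a vector field $\Phi$ on $\mathbb{R}^n$ writing the inner integral as $\int \mathrm{div}_{\gamma_n}\Phi\cdot f\,d\gamma_n$, with $\|\Phi\|_q\le C(p)c_t$ and $\|\mathrm{div}_{\gamma_n}\Phi\|_q\le 2$. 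Applying the definition of $\sigma_{\gamma_n,p}$ to $\Phi/2$ then produces the announced bound $2\sigma_{\gamma_n,p}(f, 2^{-1}C(p)c_t)$.

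The construction uses the orthogonal involution
$$
R_\tau(\xi, \eta):=\bigl(e^{-\tau}\xi + \sqrt{1-e^{-2\tau}}\,\eta,\ \sqrt{1-e^{-2\tau}}\,\xi - e^{-\tau}\eta\bigr)
$$
on $\mathbb{R}^{2n}$, which preserves $\gamma_n\otimes\gamma_n$. Setting $(\xi, \eta)=R_t(x, y)$ (so that $\xi$ equals the Mehler argument) and using the symmetry of $\gamma_n$, the target integral rewrites as $\int f(\xi)\bigl[K_t\varphi(\xi)-K_0\varphi(\xi)\bigr]\gamma_n(d\xi)$ with $K_\tau\varphi(\xi):=\int \varphi(R_\tau(\xi,\eta))\gamma_n(d\eta)$. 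A direct chain-rule calculation, using the identities $e^{-\tau}\partial_\tau a + \sqrt{1-e^{-2\tau}}\partial_\tau b = \frac{e^{-\tau}}{\sqrt{1-e^{-2\tau}}}\eta$ and $\sqrt{1-e^{-2\tau}}\partial_\tau a - e^{-\tau}\partial_\tau b = -\frac{e^{-\tau}}{\sqrt{1-e^{-2\tau}}}\xi$ with $(a,b)=R_\tau(\xi,\eta)$, yields
$$
\frac{d}{d\tau}\varphi(R_\tau(\xi,\eta)) = \frac{e^{-\tau}}{\sqrt{1-e^{-2\tau}}}\bigl[\langle\eta,\nabla_\xi(\varphi\circ R_\tau)\rangle - \langle\xi,\nabla_\eta(\varphi\circ R_\tau)\rangle\bigr].
$$
Integrating in $\eta$, the Gaussian integration by parts $\int \nabla_\eta(\varphi\circ R_\tau)\,\gamma_n(d\eta) = \int \eta\,(\varphi\circ R_\tau)\,\gamma_n(d\eta)=:V_\tau(\xi)$ turns this into $\frac{e^{-\tau}}{\sqrt{1-e^{-2\tau}}}[\mathrm{div}\,V_\tau(\xi) - \langle \xi, V_\tau(\xi)\rangle] = \frac{e^{-\tau}}{\sqrt{1-e^{-2\tau}}}\mathrm{div}_{\gamma_n} V_\tau(\xi)$. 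Integrating in $\tau\in[0,t]$ gives $K_t\varphi - K_0\varphi = \mathrm{div}_{\gamma_n}\Phi$ with
$$
\Phi(\xi):=\int_0^t \frac{e^{-\tau}}{\sqrt{1-e^{-2\tau}}}\,V_\tau(\xi)\,d\tau.
$$

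For the norm estimates, Hölder's inequality applied to the scalar projections $\langle v, V_\tau(\xi)\rangle$ for unit $v\in\mathbb{R}^n$ (using that $\langle v,\eta\rangle\sim N(0,1)$ under $\gamma_n$) gives $|V_\tau(\xi)|\le C(p)\bigl(\int|\varphi(R_\tau(\xi,\eta))|^q\gamma_n(d\eta)\bigr)^{1/q}$. The rotation invariance of $\gamma_n\otimes\gamma_n$ under $R_\tau$ then yields $\|V_\tau\|_q\le C(p)\|\varphi\|_{L^q(\gamma_n\otimes\gamma_n)}\le C(p)$, and Minkowski's integral inequality produces $\|\Phi\|_q\le C(p)c_t$. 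Meanwhile $\|\mathrm{div}_{\gamma_n}\Phi\|_q = \|K_t\varphi - K_0\varphi\|_q \le 2$ directly from Jensen's inequality applied to each $K_\tau$ and the rotation invariance. Rescaling $\Phi\mapsto \Phi/2$ makes it admissible in Definition \ref{D4.1} with parameter $\varepsilon = 2^{-1}C(p)c_t$, so
$$
\iint\varphi(x,y)[f(z_t)-f(x)]\,\gamma_n(dx)\gamma_n(dy) = 2\int \mathrm{div}_{\gamma_n}(\Phi/2)\,f\,d\gamma_n \le 2\sigma_{\gamma_n,p}(f, 2^{-1}C(p)c_t),
$$
and taking the supremum over $\varphi$ completes the proof.

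The main obstacle is the trigonometric computation that makes $\mathrm{div}_{\gamma_n}V_\tau$ appear cleanly: the $\tau$-derivative of the Mehler pairing mixes $\xi$ and $\eta$ in a way that, only after inverting the gradient transformation rule under the orthogonal $R_\tau$, recombines into the Gaussian divergence in $\xi$ with integrating factor $e^{-\tau}/\sqrt{1-e^{-2\tau}}$ giving exactly $c_t$. The remaining issues are routine: smoothness of $\varphi$ can be assumed by density in $L^q$, the resulting $\Phi$ is smooth in $\xi$ and is admissible in the $\sigma_{\gamma_n,p}$-supremum by the approximation remark following Definition \ref{D4.1}, and no smoothness of $f$ is needed since the definition of $\sigma_{\gamma_n,p}$ pairs an $L^p$ function against a divergence.
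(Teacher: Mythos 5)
Your proposal is correct and follows essentially the same route as the paper: duality in $L^q(\gamma_n\otimes\gamma_n)$, a measure-preserving rotation placing $f$ at a single argument, representation of the difference as $\int_0^t\partial_\tau K_\tau\varphi\,d\tau$ with $\partial_\tau K_\tau\varphi=\frac{e^{-\tau}}{\sqrt{1-e^{-2\tau}}}\,\mathrm{div}_{\gamma_n}V_\tau$, and the same two norm bounds giving $\|\Phi\|_q\le C(p)c_t$ and $\|\mathrm{div}_{\gamma_n}\Phi\|_q\le2$. The only (immaterial) differences are that you verify the divergence identity by a direct chain-rule and Gaussian integration-by-parts computation where the paper tests against auxiliary functions $\psi$, and your $R_\tau$ is the paper's rotation composed with $\eta\mapsto-\eta$.
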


\begin{proof}
For every function $\varphi\in C_0^\infty(\mathbb{R}^{2n})$ we can write
\begin{multline*}
\iint \varphi(x,y)[f(e^{-t}x + \sqrt{1- e^{-2t}}y) - f(x)]\, \gamma_n(dx)\gamma_n(dy)
\\
=
\int f(u) \int [\varphi(e^{-t}u - \sqrt{1- e^{-2t}}v,\sqrt{1- e^{-2t}}u + e^{-t}v) - \varphi(u,v)]\, \gamma_n(dv)\, \gamma_n(du)
\\
=
\int f(u) \int_0^t \frac{\partial}{\partial s} g_s(u)\, ds\, \gamma_n(du),
\end{multline*}
where
$$
g_s(u) := \int\varphi(e^{-s}u - \sqrt{1- e^{-2s}}v,\sqrt{1- e^{-2s}}u + e^{-s}v)\, \gamma_n(dv).
$$
We now note that for an arbitrary function $\psi\in C_0^\infty(\mathbb{R}^n)$ we have
\begin{multline*}
\int\psi(u) \frac{\partial}{\partial s} g_s(u)\, \gamma_n(du)
\\
=
\frac{\partial}{\partial s} \int \psi(u) \int\varphi(e^{-s}u - \sqrt{1- e^{-2s}}v,\sqrt{1- e^{-2s}}u + e^{-s}v)\, \gamma_n(dv)\, \gamma_n(du)
\\
=
\frac{\partial}{\partial s} \iint \varphi(x,y)\psi(e^{-s}x + \sqrt{1- e^{-2s}}y)\, \gamma_n(dx)\, \gamma_n(dy)
\\
=
\frac{e^{-s}}{\sqrt{1- e^{-2s}}}\iint \varphi(x,y) \langle\nabla\psi(e^{-s}x + \sqrt{1- e^{-2s}}y), e^{-s}y - \sqrt{1-e^{-2s}x}\rangle\, \gamma_n(dx)\, \gamma_n(dy)
\\
=
\frac{e^{-s}}{\sqrt{1- e^{-2s}}}\int \Bigl\langle \nabla\psi(u), \int v\varphi(e^{-s}u - \sqrt{1- e^{-2s}}v,\sqrt{1- e^{-2s}}u + e^{-s}v)\, \gamma_n(dv)\Bigr\rangle\, \gamma_n(du)
\\=
-\int\psi(u) {\rm div}_{\gamma_n} G_s(u)\, \gamma_n(du),
\end{multline*}
where
$$
G_s(u) := \frac{e^{-s}}{\sqrt{1- e^{-2s}}}\int v\varphi(e^{-s}u - \sqrt{1- e^{-2s}}v,\sqrt{1- e^{-2s}}u + e^{-s}v)\, \gamma_n(dv)
\in C_b^\infty(\mathbb{R}^n).
$$
Thus,
$$
\frac{\partial}{\partial s} g_s(u) = {\rm div}_\gamma \bigl(-G_s(u)\bigr)
$$
and
$$
\int f \int_0^t \frac{\partial}{\partial s} g_s\, ds\, d\gamma_n
=
\int{\rm div}_\gamma \Bigl(-\int_0^t G_s\, ds\Bigr) f d\gamma_n.
$$
We observe that
\begin{multline*}
{\rm div}_{\gamma_n} \Bigl(-\int_0^t G_s\, ds\Bigr)
=
\int_0^t {\rm div}_{\gamma_n} \bigl(-G_s\bigr)\, ds
=
\int_0^t \frac{\partial}{\partial s} g_s(u)\, ds
\\
=
\int [\varphi(e^{-t}u - \sqrt{1- e^{-2t}}v,\sqrt{1- e^{-2t}}u + e^{-t}v) - \varphi(u,v)]\, \gamma_n(dv)
\end{multline*}
and that
$$
\Bigl\|{\rm div}_{\gamma_n} \Bigl(-\int_0^t G_s, ds\Bigr)\Bigr\|_q
\le
2\|\varphi\|_{L^q(\gamma_n\otimes\gamma_n)}.
$$
Moreover, we have
$$
\Bigl\|\int_0^t G_s\, ds\Bigr\|_q
\le
\int_0^t\bigl\| G_s\bigr\|_q\, ds
$$
and it remains to estimate $\bigl\| G_s\bigr\|_q$.
To do this, we note that for an arbitrary vector field $\Psi\in C_0^\infty (\mathbb{R}^n)$
\begin{multline*}
\int \langle\Psi, G_s\rangle \, d\gamma_n
=
\frac{e^{-s}}{\sqrt{1- e^{-2s}}}  \iint \langle\Psi (u), v\rangle \varphi(e^{-s}u - \sqrt{1- e^{-2s}}v,\sqrt{1- e^{-2s}}u + e^{-s}v)\, \gamma_n(dv)\gamma_n(du)
\\
\le
\frac{e^{-s}}{\sqrt{1- e^{-2s}}} \|\varphi\|_{L^q(\gamma_n\otimes\gamma_n)}\Bigl(\iint|\langle\Psi (u), v\rangle|^p\, \gamma_n(dv)\gamma_n(du)\Bigr)^{1/p}
\\
\le
\frac{e^{-s}}{\sqrt{1- e^{-2s}}} C(p)\|\varphi\|_{L^q(\gamma_n\otimes\gamma_n)}\|\Psi\|_p.
\end{multline*}
Thus,
$$
\bigl\| G_s\bigr\|_q
\le
C(p)\frac{e^{-s}}{\sqrt{1- e^{-2s}}}\|\varphi\|_{L^q(\gamma_n\otimes\gamma_n)}
$$
and
$$
\Bigl\|\int_0^t G_s\, ds\Bigr\|_q
\le
\int_0^t\bigl\| G_s\bigr\|_q\, ds
\le
C(p)c_t\|\varphi\|_{L^q(\gamma_n\otimes\gamma_n)}.
$$
Hence, for an arbitrary function $\varphi\in C_0^\infty(\mathbb{R}^{2n})$ with $\|\varphi\|_{L^q(\gamma_n\otimes\gamma_n)}=1$
we have
\begin{multline*}
\iint \varphi(x,y)[f(e^{-t}x + \sqrt{1- e^{-2t}}y) - f(x)]\, \gamma_n(dx)\gamma_n(dy)
=
\int f \int_0^t \frac{\partial}{\partial s} g_s\, ds\, d\gamma_n
\\
=
\int{\rm div}_{\gamma_n} \Bigl(-\int_0^t G_s\, ds\Bigr) f d\gamma_n
\le
2\sigma_{\gamma_n, p}(f, 2^{-1}C(p)c_t).
\end{multline*}
Taking the supremum over functions $\varphi\in C_0^\infty(\mathbb{R}^{2n})$ with
$\|\varphi\|_{L^q(\gamma_n\otimes\gamma_n)}=1$ we obtain the announced bound.
\end{proof}

The following theorem is a Gaussian analog of Theorem \ref{T3.1}.

\begin{theorem}\label{lem4.2}
For any function $f\in L^p(\gamma)$, where $p\in[1, \infty)$, we have
$$
a_{\gamma, p}(f, t)\le
2\sigma_{\gamma, p}(f, 2^{-1}C(p)c_t).
$$
If $p>1$ we have the inverse bound:
$$
\sigma_{\gamma, p}(f, \varepsilon) \le \bigl(1+ C(p/(p-1))\bigr)a_{\gamma, p}(f, \varepsilon^2).
$$
\end{theorem}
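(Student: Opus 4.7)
The plan is to prove the two inequalities by separate routes: the first by lifting Lemma \ref{lem4.1} from $\mathbb{R}^n$ via cylindrical approximation, and the second by a direct semigroup computation modeled on the finite-dimensional argument in Lemma \ref{t1.1}, with the Ornstein--Uhlenbeck semigroup $T_t$ replacing the heat semigroup $P_t$.

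For the first inequality, I fix an orthonormal basis $\{l_j\}\subset X_\gamma^*$ together with its biorthogonal Cameron--Martin vectors $\{e_j\}\subset H$, and set $\tilde{f}_n(x) := (\mathbb{E}_n f)(l_1(x), \ldots, l_n(x))$, which tends to $f$ in $L^p(\gamma)$. The key observation is a pair of compatibility identities: (i) $a_{\gamma, p}(\tilde{f}_n, t) = a_{\gamma_n, p}(\mathbb{E}_n f, t)$, since the joint image of $\gamma\otimes\gamma$ under $(x,y)\mapsto (l_1(x),\ldots,l_n(y))$ is $\gamma_n\otimes\gamma_n$ and $l_j(e^{-t}x+\sqrt{1-e^{-2t}}y) = e^{-t}l_j(x)+\sqrt{1-e^{-2t}}l_j(y)$; and (ii) $\sigma_{\gamma_n, p}(\mathbb{E}_n f, \varepsilon) \le \sigma_{\gamma, p}(\tilde{f}_n, \varepsilon)$, obtained by lifting each test field $\Phi\in C_0^\infty(\mathbb{R}^n, \mathbb{R}^n)$ to $\tilde\Phi(x) := \sum_i \Phi_i(l_1(x),\ldots,l_n(x))\,e_i$ and noting from the formula for ${\rm div}_\gamma$ in the preamble that $\|\tilde\Phi\|_q$, $\|{\rm div}_\gamma \tilde\Phi\|_q$ and $\int {\rm div}_\gamma \tilde\Phi \cdot \tilde{f}_n\,d\gamma$ all equal their finite-dimensional counterparts. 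Combining (i), (ii), and Lemma \ref{lem4.1} applied to $\mathbb{E}_n f$ yields the desired bound for $\tilde{f}_n$, and the conclusion follows by passage to the limit, using the Lipschitz dependence of $a_{\gamma,p}(\cdot,t)$ and $\sigma_{\gamma,p}(\cdot,\varepsilon)$ on their $L^p(\gamma)$ arguments (with constants $2$ and $1$ respectively).

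For the second inequality, I test with $\Phi\in\mathcal{FC}_0^\infty(X, H)$ satisfying $\|{\rm div}_\gamma \Phi\|_q \le 1$ and $\|\Phi\|_q\le\varepsilon$, split through the semigroup, and integrate by parts:
$$\int {\rm div}_\gamma \Phi\cdot f\,d\gamma = \int {\rm div}_\gamma \Phi\cdot (f - T_t f)\,d\gamma - \int \langle \Phi, \nabla T_t f\rangle_H\,d\gamma.$$
The first summand is bounded by $\|{\rm div}_\gamma\Phi\|_q\,a_{\gamma,p}(f,t)$, since Jensen applied to the Mehler integral gives $\|f-T_t f\|_p \le a_{\gamma,p}(f,t)$. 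For the second summand, differentiating Mehler's formula and performing Gaussian integration by parts in $y$ produces
$$\partial_h T_t f(x) = \frac{e^{-t}}{\sqrt{1-e^{-2t}}}\int f(e^{-t}x+\sqrt{1-e^{-2t}}y)\,\hat{h}(y)\,\gamma(dy),$$
where $\hat{h}\in X_\gamma^*$ is the functional corresponding to $h\in H$. Subtracting $f(x)$ (valid since $\int\widehat{\Phi(x)}(y)\,\gamma(dy) = 0$ for each $x$) and applying Hölder on $X\times X$ bounds the second summand by $\frac{e^{-t}}{\sqrt{1-e^{-2t}}}\,a_{\gamma,p}(f,t)\,\|\widehat\Phi\|_{L^q(\gamma\otimes\gamma)}$; the Gaussianity of $\widehat{\Phi(x)}$ in $y$ then gives $\|\widehat\Phi\|_{L^q(\gamma\otimes\gamma)} = C(q)\|\Phi\|_q$. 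Choosing $t = \varepsilon^2$ and invoking the preamble bound $e^{-\tau}/\sqrt{1-e^{-2\tau}}\le (2\tau)^{-1/2}$ forces $C(q)\frac{e^{-t}}{\sqrt{1-e^{-2t}}}\varepsilon \le C(q)/\sqrt{2} \le C(q)$, completing the inequality.

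The hard part will be rigorously justifying the Gaussian integration by parts when $f$ is only in $L^p(\gamma)$; the cleanest route is to verify the identity for smooth cylindrical $f$ first, where it reduces to finite-dimensional calculus, and then to extend by density using the same $L^p$-continuity argument as in the first inequality. The restriction $p>1$ enters precisely through the need for $C(q)<\infty$, i.e., $q = p/(p-1)<\infty$.
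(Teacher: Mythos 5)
Your proposal is correct and follows essentially the same route as the paper: the first bound is obtained by lifting Lemma \ref{lem4.1} through the conditional expectations $\mathbb{E}_nf$ and passing to the limit, and the second by splitting $\int \mathrm{div}_\gamma\Phi\, f\, d\gamma$ through $T_tf$, using $\|f-T_tf\|_p\le a_{\gamma,p}(f,t)$, the Mehler representation with the centering $\int\langle\Phi(x),y\rangle_H\,\gamma(dy)=0$, and the choice $t=\varepsilon^2$. The only cosmetic differences are that the paper uses the contraction $\sigma_{\gamma_n,p}(\mathbb{E}_nf,\cdot)\le\sigma_{\gamma,p}(f,\cdot)$ so the limit is needed on one side only, and it moves the semigroup onto $\Phi$ via $\mathrm{div}_\gamma\Phi\,T_tf \leftrightarrow e^{-t}\mathrm{div}_\gamma T_t\Phi\, f$ rather than differentiating $T_tf$, but both reduce to the same double integral.
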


\begin{proof}
To prove the first part of the lemma, we
fix an orthonormal basis $\{l_n\}\subset X^*$ in $X_\gamma^*$.
By the previous lemma we have
$$
a_{\gamma_n, p}(\mathbb{E}_nf, t)\le
2\sigma_{\gamma_n, p}(\mathbb{E}_nf, 2^{-1}C(p)c_t)
\le
2\sigma_{\gamma, p}(f, 2^{-1}C(p)c_t).
$$
We observe that $a_{\gamma_n, p}(f, t)\to a_{\gamma, p}(f, t)$ as $n$ tends to infinity,
which completes the proof.

Let now $f\in L^p(\gamma)$ for some $p>1$.
For an arbitrary vector field $\Phi\in \mathcal{FC}_0^\infty(X, H)$ we can write
\begin{multline*}
\int {\rm div}_\gamma\Phi T_tf\, d\gamma
=
e^{-t}\int {\rm div}_\gamma T_t\Phi f\, d\gamma
\\
=
-\frac{e^{-t}}{\sqrt{1-e^{-2t}}}
\iint f(u)\langle\Phi(e^{-t}u-\sqrt{1-e^{-2t}}v), e^{-t}v + \sqrt{1-e^{-2t}}u\rangle_H\, \gamma(dv)\gamma(du)
\\
=
-\frac{e^{-t}}{\sqrt{1-e^{-2t}}}
\iint f(e^{-t}x + \sqrt{1-e^{-2t}}y)\langle\Phi(x), y\rangle_H\, \gamma(dy)\gamma(dx).
\end{multline*}
We observe that
$$
\int f(x)\langle\Phi(x), y\rangle_H\, \gamma(dy) = 0
$$
for an arbitrary fixed point $x$.
Thus, the last expression is equal to
\begin{multline*}
-\frac{e^{-t}}{\sqrt{1-e^{-2t}}}
\iint \langle\Phi(x), y\rangle_H \bigl[f(e^{-t}x + \sqrt{1-e^{-2t}}y) - f(x)\bigr]\, \gamma(dy)\gamma(dx)
\\
\le
t^{-1/2}a_{\gamma, p}(f, t)\Bigl(\iint |\langle\Phi(x), y\rangle_H|^q \, \gamma(dy)\gamma(dx)\Bigr)^{1/q}
=
C(q)t^{-1/2}a_{\gamma, p}(f, t) \|\Phi\|_q.
\end{multline*}
So, we have proved the estimate
$$
\int {\rm div}_\gamma\Phi T_tf\, d\gamma
\le
C(q)t^{-1/2}a_{\gamma, p}(f, t)\|\Phi\|_q.
$$
Now we have
$$
\int {\rm div}_\gamma\Phi f\, d\gamma
=
\int {\rm div}_\gamma\Phi [f - T_tf]\, d\gamma
+
\int {\rm div}_\gamma\Phi T_tf\, d\gamma.
$$
The first term in the above expression is estimated by
$$
a_{\gamma, p}(f, t)\|{\rm div}_\gamma\Phi\|_q
$$
and the second term, as we have proved, is not greater than
$$
C(q)t^{-1/2}a_{\gamma, p}(f, t)\|\Phi\|_q.
$$
Taking $t = \varepsilon^2$ we obtain
$$
\sigma_{\gamma, p}(f, \varepsilon) \le (1+ C(q))a_{\gamma, p}(f, \varepsilon^2),
$$
which is the announced bound.
\end{proof}

\begin{corollary}\label{c4.1}
For any function $f\in B^\alpha_{p, \theta}(\gamma)$, where $p\in[1, \infty)$,
and for any Lipschitz function $u\colon\mathbb{R}\to\mathbb{R}$ we have
$$
a_{\gamma, p}(u(f), t)\le  2^{1-\alpha}(\alpha\theta)^{1/\theta}Lip(u)C(p)^\alpha c_t^\alpha V^{p, \theta, \alpha}(f),
$$
where $Lip(u)$ is the Lipschitz constant of the function $u$.
\end{corollary}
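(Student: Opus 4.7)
The plan is to chain three bounds. First, since $u$ is Lipschitz, a pointwise estimate gives
$$
|u(f(e^{-t}x+\sqrt{1-e^{-2t}}y)) - u(f(x))| \le \mathrm{Lip}(u)\,|f(e^{-t}x+\sqrt{1-e^{-2t}}y) - f(x)|,
$$
which, after taking the $L^p(\gamma\otimes\gamma)$ norm, yields immediately
$$
a_{\gamma,p}(u(f),t)\le \mathrm{Lip}(u)\, a_{\gamma,p}(f,t).
$$
Second, by Theorem \ref{lem4.2} we have
$$
a_{\gamma,p}(f,t)\le 2\sigma_{\gamma,p}\bigl(f,\, 2^{-1}C(p)c_t\bigr).
$$
So it remains to produce a pointwise bound on $\sigma_{\gamma,p}(f,\cdot)$ in terms of the Besov norm $V^{p,\theta,\alpha}(f)$, with the right power of the argument.

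For the third step I would exploit the monotonicity of $\sigma_{\gamma,p}(f,\cdot)$ established in Lemma \ref{lem1.1} (whose argument transfers to the Gaussian case by the remark in the paragraph after Definition \ref{D4.1}). For any $\varepsilon>0$ and any $s\ge\varepsilon$ we have $\sigma_{\gamma,p}(f,\varepsilon)\le\sigma_{\gamma,p}(f,s)$, hence
$$
\sigma_{\gamma,p}(f,\varepsilon)^\theta \int_\varepsilon^\infty s^{-\alpha\theta-1}\,ds
\le \int_\varepsilon^\infty \bigl[s^{-\alpha}\sigma_{\gamma,p}(f,s)\bigr]^\theta s^{-1}\,ds
\le V^{p,\theta,\alpha}(f)^\theta.
$$
Since $\int_\varepsilon^\infty s^{-\alpha\theta-1}\,ds = \varepsilon^{-\alpha\theta}/(\alpha\theta)$, this gives
$$
\sigma_{\gamma,p}(f,\varepsilon)\le (\alpha\theta)^{1/\theta}\varepsilon^\alpha V^{p,\theta,\alpha}(f),
$$
with the limiting case $\theta=\infty$ handled directly (the constant becomes $1$, consistent with $(\alpha\theta)^{1/\theta}\to 1$).

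Substituting $\varepsilon = 2^{-1}C(p)c_t$ and combining the three inequalities,
$$
a_{\gamma,p}(u(f),t)\le 2\,\mathrm{Lip}(u)\cdot (\alpha\theta)^{1/\theta}\bigl(2^{-1}C(p)c_t\bigr)^\alpha V^{p,\theta,\alpha}(f)
= 2^{1-\alpha}(\alpha\theta)^{1/\theta}\mathrm{Lip}(u)\,C(p)^\alpha c_t^\alpha\, V^{p,\theta,\alpha}(f),
$$
which is precisely the announced bound. I do not expect any serious obstacle: the only delicate point is the integral trick of the third step, where one must be careful to integrate over $[\varepsilon,\infty)$ rather than $[\varepsilon,2\varepsilon]$ in order to produce the clean constant $(\alpha\theta)^{1/\theta}$ that matches the target (and degenerates correctly to $1$ when $\theta=\infty$).
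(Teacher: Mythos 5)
Your proposal is correct and follows exactly the paper's own route: the pointwise Lipschitz bound, the first inequality of Theorem \ref{lem4.2}, and then the monotonicity-plus-tail-integral trick $\varepsilon^{-\alpha\theta}\sigma_{\gamma,p}(f,\varepsilon)^\theta\le\alpha\theta\int_\varepsilon^\infty r^{-\alpha\theta-1}\sigma_{\gamma,p}(f,r)^\theta\,dr$ with $\varepsilon=2^{-1}C(p)c_t$. Nothing to add.
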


\begin{proof}
By the Lipschitz continuity of the function $u$ and by the previous lemma we can write
\begin{multline*}
a_{\gamma, p}(u(f), t)
=
\biggl(\iint\Bigl|u\bigl(f(e^{-t}x + \sqrt{1- e^{-2t}y})\bigr) - u\bigl(f(x)\bigr)\Bigr|^p\, \gamma(dy)\gamma(dx)\biggr)^{1/p}
\\
\le
Lip(u)\biggl(\iint\Bigl|f(e^{-t}x + \sqrt{1- e^{-2t}y}) - f(x)\Bigr|^p\, \gamma(dy)\gamma(dx)\biggr)^{1/p}
\\
=
Lip(u)a_{\gamma, p}(f, t)
\le
2Lip(u)\sigma_{\gamma, p}(f, 2^{-1}C(p)c_t).
\end{multline*}
We now note that
$$
\bigl(2^{-1}C(p)c_t\bigr)^{-\alpha\theta}\bigl[\sigma_{\gamma, p}(f, 2^{-1}C(p)c_t)\bigr]^\theta
\le
\alpha\theta\int\limits_{2^{-1}C(p)c_t}^\infty r^{-\alpha\theta-1} [\sigma_{\gamma, p}(f, r)]^\theta dr
\le
\alpha\theta \bigl[V^{p, \theta, \alpha}(f)\bigr]^\theta.
$$
Thus,
$$
a_{\gamma, p}(u(f), t)
\le
2^{1-\alpha}(\alpha\theta)^{1/\theta}Lip(u)C(p)^\alpha c_t^\alpha V^{p, \theta, \alpha}(f)
$$
as announced.
\end{proof}

Also, as a corollary, we obtain that
the conditions $V^{p, \theta, \alpha}_\gamma(f)<\infty$ and $A^{p, \theta, \alpha}_\gamma(f)<\infty$
are equivalent for $p>1$.

\begin{corollary}\label{t4.1}
For any function $f\in B^\alpha_{p, \theta}(\gamma)$, where $p\in[1, \infty)$, we have
$$
A^{p, \theta, \alpha}_\gamma(f)\le
2^{1-\alpha+1/\theta}C(p)^{\alpha} V^{p, \theta, \alpha}_\gamma(f).
$$
Moreover, for $p\in(1,\infty)$ we have the inverse statement, that is,
if for a function $f\in L^p(\gamma)$
the quantity $A^{p, \theta, \alpha}_\gamma(f)$ is finite,
 then $f\in B^\alpha_{p, \theta}(\gamma)$
and
$$
V^{p, \theta, \alpha}_\gamma(f)\le 2^{-1/\theta}\bigl(1+ C(p/(p-1))\bigr)A^{p, \theta, \alpha}_\gamma(f).
$$
\end{corollary}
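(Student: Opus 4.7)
The plan is to derive both inequalities directly from the pointwise comparisons between $a_{\gamma,p}(f,\cdot)$ and $\sigma_{\gamma,p}(f,\cdot)$ provided by Theorem \ref{lem4.2}, by plugging them into the integral definitions of $A^{p,\theta,\alpha}_\gamma(f)$ and $V^{p,\theta,\alpha}_\gamma(f)$ and performing an appropriate change of variables between the time parameter $t$ used for the Ornstein--Uhlenbeck semigroup and the scale parameter $s$ in $\sigma_{\gamma,p}$.

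For the first (forward) inequality, I would start from the bound
$$
a_{\gamma,p}(f,t)\le 2\sigma_{\gamma,p}(f,2^{-1}C(p)c_t)
$$
established in Theorem \ref{lem4.2}. Since $\sigma_{\gamma,p}(f,\cdot)$ is nondecreasing (Lemma \ref{lem1.1} applied to the Gaussian setting) and $c_t\le\sqrt{2t}$ as recorded just before Definition \ref{D4.1}, I may replace the argument by $2^{-1/2}C(p)\sqrt{t}$. Substituting this into the definition of $A^{p,\theta,\alpha}_\gamma(f)$ and performing the change of variables $s=2^{-1/2}C(p)\sqrt{t}$ (so that $t^{-1}dt=2s^{-1}ds$ and $t^{-\alpha/2}$ becomes a constant multiple of $s^{-\alpha}$) turns the integral defining $A^{p,\theta,\alpha}_\gamma(f)$ into a constant multiple of the integral defining $V^{p,\theta,\alpha}_\gamma(f)$, yielding the announced estimate up to a power of two and the factor $C(p)^\alpha$.

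For the reverse inequality, valid for $p\in(1,\infty)$, I would use the second assertion of Theorem \ref{lem4.2},
$$
\sigma_{\gamma,p}(f,\varepsilon)\le \bigl(1+C(p/(p-1))\bigr)a_{\gamma,p}(f,\varepsilon^2),
$$
substitute into the definition of $V^{p,\theta,\alpha}_\gamma(f)$ and then change variables $t=\varepsilon^2$. The relations $\varepsilon^{-1}d\varepsilon=\tfrac{1}{2}t^{-1}dt$ and $\varepsilon^{-\alpha}=t^{-\alpha/2}$ convert the integral over $\varepsilon$ into exactly $(1/2)$ times the integral defining $[A^{p,\theta,\alpha}_\gamma(f)]^\theta$, and after taking $\theta$-th roots the factor $2^{-1/\theta}$ appears as stated.

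There is no serious obstacle here; both inequalities are a bookkeeping exercise in the change of variables and tracking the constants. The only subtle point is the first direction, where one must combine the monotonicity of $\sigma_{\gamma,p}(f,\cdot)$ with the inequality $c_t\le\sqrt{2t}$ in order to reduce the argument $2^{-1}C(p)c_t$ to something expressible as a power of $t$, so that the change of variables $s\sim\sqrt{t}$ works cleanly. Both arguments are direct, and nothing beyond the facts already collected in Lemma \ref{lem1.1}, Theorem \ref{lem4.2} and the preliminary estimates on $c_t$ is needed.
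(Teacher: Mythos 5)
Your proposal follows the paper's own proof essentially verbatim: both directions are obtained by substituting the two bounds of Theorem \ref{lem4.2} into the integral definitions of $A^{p,\theta,\alpha}_\gamma$ and $V^{p,\theta,\alpha}_\gamma$ and changing variables $s\sim\sqrt{t}$ (respectively $t=\varepsilon^2$), exactly as you describe. The only point of divergence is the forward constant: with the correct estimate $c_t\le(2t)^{1/2}$, which you use, the computation yields $2^{1-\alpha/2+1/\theta}C(p)^\alpha$ rather than the stated $2^{1-\alpha+1/\theta}C(p)^\alpha$ --- the paper's proof silently replaces $c_t$ by $t^{1/2}$ at this step --- but this affects only an absolute constant and not the substance of the corollary.
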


\begin{proof}
For a function $f\in B^\alpha_{p, \theta}(\gamma)$,
by Lemma \ref{lem4.2}, we have
$$
a_{\gamma, p}(f, t)
\le
2\sigma_{\gamma, p}(f, 2^{-1}C(p)c_t)
\le
2\sigma_{\gamma, p}(f, 2^{-1}C(p)t^{1/2}).
$$
Thus,
\begin{multline*}
\bigl[A^{p, \theta, \alpha}_\gamma(f)\bigr]^\theta=\int_0^\infty \bigl[t^{-\alpha/2}a_{\gamma, p}(f, t)\bigr]^\theta t^{-1}dt
\le
2^\theta\int_0^\infty t^{-\alpha\theta/2}\bigl[\sigma_{\gamma, p}(f, 2^{-1}C(p)t^{1/2})\bigr]^\theta t^{-1}dt
\\
=
2^{1+\theta-\alpha\theta}C(p)^{\alpha\theta}\int_0^\infty r^{-\alpha\theta}\bigl[\sigma_{\gamma, p}(f, r)\bigr]^\theta r^{-1}dr
=
2^{1+\theta-\alpha\theta}C(p)^{\alpha\theta}\bigl[V^{p, \theta, \alpha}_\gamma(f)\bigr]^\theta,
\end{multline*}
which is the announced bound.

Conversely, for any function $f\in L^p(\gamma)$ with $p>1$ and finite
$A^{p, \theta, \alpha}_\gamma(f)$,
Lemma \ref{lem4.2} gives that
$$
\sigma_{\gamma, p}(f, \varepsilon) \le \bigl(1+ C(q)\bigr)a_{\gamma, p}(f, \varepsilon^2),
$$
which yields
\begin{multline*}
\bigl[V^{p, \theta, \alpha}_\gamma(f)\bigr]^\theta
=
\int_0^\infty\bigl[r^{-\alpha}\sigma_{\gamma, p}(f, r)\bigr]^\theta r^{-1}dr
\\
\le
\bigl(1 + C(q)\bigr)^\theta\int_0^\infty\bigl[r^{-\alpha}a_{\gamma, p}(f, r^2)\bigr]^\theta r^{-1}dt
=
2^{-1}\bigl(1 + C(q)\bigr)^\theta\int_0^\infty \bigl[t^{-\alpha/2}a_{\gamma, p}(f, t)\bigr]^\theta t^{-1}dt
\\
=
2^{-1}\bigl(1 + C(q)\bigr)^\theta\bigl[A^{p, \theta, \alpha}_\gamma(f)\bigr]^\theta.
\end{multline*}
This is the announced estimate.
\end{proof}

We now proceed to
a log-Sobolev-type embedding theorem for Besov classes with respect to a Gaussian measure.
As we have mentioned in the introduction,
the main idea of the proof is to use the
short time behavior of the Ornstein-Uhlenbeck semigroup
together with its hypercontractivity property, similarly in a sense
to the approach from \cite{Led}.

\begin{theorem}\label{t4.2}
For any function $f\in B^\alpha_{p, \theta}(\gamma)$, where $p\in(1, \infty)$, and for any number $\beta\in (0,\alpha)$
the function $|f||\ln|f||^{\beta/2}$ belongs to $L^p(\gamma)$. Moreover, there is a constant $C=C(p, \theta, \alpha, \beta)$,
depending only on parameters $p$, $\theta$, $\alpha$, and $\beta$, such that
$$
\Bigl(\int|f|^p\bigl|\ln(|f|\|f\|_p^{-1})\bigr|^{p\beta/2} \, d\gamma\Bigr)^{1/p}
\le
C\bigl(\|f\|_p + V_\gamma^{p, \theta, \alpha}(f)\bigr).
$$
\end{theorem}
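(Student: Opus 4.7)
The plan is to adapt the semigroup approach of Ledoux, combining the short-time behavior of the Ornstein--Uhlenbeck semigroup on Besov functions with Nelson's hypercontractivity $\|T_tf\|_{q(t)}\le\|f\|_p$, where $q(t)=1+(p-1)e^{2t}$. First I would reduce to $f\ge 0$ by applying Corollary~\ref{c4.1} with the $1$-Lipschitz function $u(x)=|x|$, and by homogeneity normalize $\|f\|_p=1$, so that the claim becomes
\[
\int f^p |\ln f|^{p\beta/2}\, d\gamma \le C(1+V^p), \qquad V:=V^{p,\theta,\alpha}_\gamma(f).
\]
Splitting at $f=e$, the contribution from $\{f\le e\}$ is bounded by a universal constant because $s\mapsto s^p|\ln s|^{p\beta/2}$ extends continuously to $[0,e]$ by zero at $s=0$; the contribution from $\{f>e\}$ is rewritten via the layer cake formula as
\[
\int_{\{f>e\}} f^p(\ln f)^{p\beta/2}\, d\gamma \lesssim \int_e^{+\infty}\lambda^{p-1}(\ln\lambda)^{p\beta/2} F(\lambda)\, d\lambda, \qquad F(\lambda):=\gamma(f>\lambda),
\]
reducing the task to a sharp tail estimate on $F$.

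For each $t>0$ and $\lambda>0$, using the inclusion $\{f>\lambda\}\subset\{T_tf>\lambda/2\}\cup\{f-T_tf>\lambda/2\}$ together with Markov's inequality, one obtains the two-term bound
\[
F(\lambda)\le \Bigl(\frac{2}{\lambda}\Bigr)^{q(t)}\|T_tf\|_{q(t)}^{q(t)} + \Bigl(\frac{2}{\lambda}\Bigr)^p \|f-T_tf\|_p^p.
\]
Nelson's hypercontractivity gives $\|T_tf\|_{q(t)}\le\|f\|_p=1$, while Lemma~\ref{lem4.2} combined with Corollary~\ref{c4.1} (for $u=\mathrm{id}$) yields the short-time estimate $\|f-T_tf\|_p\le a_{\gamma,p}(f,t)\le K c_t^{\alpha}V\le K t^{\alpha/2}V$ with $K=K(p,\alpha,\theta)$. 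The plan is then to choose $t=t(\lambda)$ balancing both contributions. A natural attempt is $t(\lambda)\sim s(\ln\ln\lambda)/\ln\lambda$ for a suitable constant $s>0$; this makes $q(t)\sim p+2s(p-1)/\ln\lambda$, so $\lambda^{-q(t)}\sim\lambda^{-p}(\ln\lambda)^{-2s(p-1)}$, while the semigroup error becomes $\lambda^{-p}(\ln\lambda)^{-p\alpha/2}(\ln\ln\lambda)^{p\alpha/2}V^p$.

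The main obstacle is precisely this balancing: taking $s$ large enough so that $2s(p-1)>p\beta/2+1$ makes the hypercontractivity contribution integrable against the weight $\lambda^{p-1}(\ln\lambda)^{p\beta/2}d\lambda$, but the naive semigroup error is integrable only when $\alpha-\beta>2/p$, which is a stronger hypothesis than $\beta<\alpha$. To cover the full range $\beta\in(0,\alpha)$, the short-time estimate must be sharpened; I would either bootstrap, feeding a first-level tail bound back into the decomposition and iterating, or exploit the $L^\theta(ds/s)$-integrability of $s^{-\alpha}\sigma_{\gamma,p}(f,s)$ to obtain a better estimate on $\|f-T_tf\|_p$ than the worst-case bound $K t^{\alpha/2}V$ on a set of scales of positive $ds/s$-measure. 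Once a tail bound of the form $F(\lambda)\lesssim\lambda^{-p}(\ln\lambda)^{-p\beta/2-1-\delta}$ for some $\delta>0$ is in place, the layer cake integration together with the reductions above gives the stated inequality with an explicit constant depending only on $p$, $\theta$, $\alpha$, and $\beta$.
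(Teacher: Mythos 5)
Your outline correctly identifies the two ingredients (hypercontractivity plus the short-time bound $\|f-T_tf\|_p\le a_{\gamma,p}(f,t)\lesssim t^{\alpha/2}V$ from Lemma \ref{lem4.2} and Corollary \ref{c4.1}), and you honestly flag where the argument breaks: the balancing only closes when $\alpha-\beta>2/p$. That obstruction is not an artifact of a suboptimal choice of $t(\lambda)$ --- it is structural to the route you chose. By passing to the tail \emph{measure} $F(\lambda)=\gamma(f>\lambda)$ via the union bound $\{f>\lambda\}\subset\{T_tf>\lambda/2\}\cup\{f-T_tf>\lambda/2\}$ and Markov, you decouple $f-T_tf$ from the level set, so the error term is the global quantity $(2/\lambda)^p\|f-T_tf\|_p^p$, and the subsequent layer-cake integration $\int\lambda^{p-1}(\ln\lambda)^{p\beta/2}F(\lambda)\,d\lambda$ costs one extra power of $\ln\lambda$ relative to what the best achievable bound $F(\lambda)\lesssim\lambda^{-p}(\ln\lambda)^{-p\alpha/2}$ can pay for. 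Neither of your proposed repairs closes this: bootstrapping improved knowledge of $F$ back into the Markov decomposition does not improve the term $(2/\lambda)^p\|f-T_tf\|_p^p$ at all, and the $L^\theta(ds/s)$ information on $s^{-\alpha}\sigma_{\gamma,p}(f,s)$ does not give a better pointwise-in-$t$ bound than $t^{\alpha/2}V$ (Corollary \ref{c4.1} already extracts essentially all of it).

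The paper's proof avoids the loss by never reducing to the tail measure. It estimates the localized $L^p$ mass directly: writing $A_s=\{|f|\ge s\}$ and testing $I_{A_s}(|f|-s)$ against $\varphi$ with $\|\varphi\|_q\le 1$, it splits $\max\{|f|,s\}$ into $T_t(\max\{|f|,s\})$ plus remainder. The remainder contributes $\lesssim V t^{\alpha/2}$ by Corollary \ref{c4.1} (applied to the $1$-Lipschitz function $\tau\mapsto\max\{|\tau|,s\}$), while the semigroup part is handled by H\"older against the hypercontractive exponent, keeping the indicator $I_{A_s}$ attached to $\varphi$; this produces the self-referencing inequality
$$
\|I_{A_s}(|f|-s)\|_p\le K V t^{\alpha/2}+[\gamma(A_s)]^{t/(pq)}\,\|I_{A_s}(|f|-s)\|_p ,
$$
and the choice $t=pq(-\ln\gamma(A_s))^{-1}$ makes the coefficient of the unknown equal to $e^{-1}$, so it can be absorbed. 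This yields $\int_{\{|f|\ge 2s\}}|f|^p\,d\gamma\lesssim V^p(\ln(s/\|f\|_p))^{-p\alpha/2}$, and integrating this truncated $L^p$ bound against $s^{-1}(\ln s)^{-1+p\beta/2}\,ds$ converges precisely for $\beta<\alpha$. The key idea your proposal is missing is exactly this: keep the indicator of the level set on both the test function and the unknown so that the hypercontractivity gain $[\gamma(A_s)]^{t/(pq)}$ multiplies the quantity you are solving for, turning the estimate into an absorbable fixed-point inequality for the truncated $L^p$ norm rather than a lossy bound on the measure.
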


\begin{proof}
We recall
the hypercontractivity property of the Ornstein--Uhlenbeck semigroup
(see \cite[Theorem 5.5.3]{GM}): for any function $f\in L^p(\gamma)$ one has
$$
\|T_tf\|_{1+(p-1)e^{2t}}\le \|f\|_p.
$$
For an arbitrary number $s>0$, let $A_s:=\{|f|\ge s\}$.
We note that the function $\tau\mapsto \max\{|\tau|, s\}$ is
$1$-Lipschitz.
Thus, for an arbitrary function $\varphi\in \mathcal{FC}^\infty(X)$ and
any number $t>0$,
by Corollary~\ref{c4.1} and by the hypercontractivity property, we have
\begin{multline*}
\int \varphi I_{A_s}(|f| - s)\, d\gamma
=
\int I_{A_s}\varphi (\max\{|f|, s\} - s)\, d\gamma
\\
=
\int I_{A_s}\varphi \bigl[\max\{|f|, s\}-T_t(\max\{|f|, s\})\bigr]\, d\gamma + \int I_{A_s}\varphi T_t(\max\{|f|, s\} - s)\, d\gamma
\\
\le
\|\varphi\|_q\|\max\{|f|, s\} - T_t(\max\{|f|, s\})\|_p + \|I_{A_s}\varphi\|_{\frac{1+(p-1)e^{2t}}{(p-1)e^{2t}}}\|T_t(\max\{|f|, s\} - s)\|_{1+(p-1)e^{2t}}
\\
\le
2^{1-\alpha}(\alpha\theta)^{1/\theta}C(p)^\alpha\|\varphi\|_q V_\gamma^{p, \theta, \alpha}(f)t^{\alpha/2}
+
\|I_{A_s}\varphi\|_{\frac{1+(p-1)e^{2t}}{(p-1)e^{2t}}}\|I_{A_s}(|f| - s)\|_p.
\end{multline*}
We note that
$$
\frac{1+(p-1)e^{2t}}{(p-1)e^{2t}} = 1 + \frac{1}{(p-1)e^{2t}} = q (1/q + 1/(pe^{2t}))\le q.
$$
Thus, we can apply H\"older's inequality to
the expression $\|I_{A_s}\varphi\|_{\frac{1+(p-1)e^{2t}}{(p-1)e^{2t}}}$
with the exponents $(1/p - 1/(pe^{2t}))^{-1}$
and $(1/q + 1/(pe^{2t}))^{-1}$, which yields
$$
\|I_{A_s}\varphi\|_{\frac{1+(p-1)e^{2t}}{(p-1)e^{2t}}}\le [\gamma(A_s)]^{\frac{e^{2t}-1}{q+pe^{2t}}}\|\varphi\|_q.
$$
Taking the supremum over functions $\varphi$ with $\|\varphi\|_q=1$ we obtain the estimate
$$
\|I_{A_s}(|f| - s)\|_p\le
2^{1-\alpha}(\alpha\theta)^{1/\theta}C(p)^\alpha V_\gamma^{p, \theta, \alpha}(f)t^{\alpha/2}
+
[\gamma(A_s)]^{\frac{e^{2t}-1}{q+pe^{2t}}}\|I_{A_s}(|f| - s)\|_p.
$$
We now observe that
$$
\frac{e^{2t}-1}{q+pe^{2t}}
=
p^{-1}\frac{q^{-1}(e^{2t}-1)}{1+q^{-1}(e^{2t}-1)}
\ge
p^{-1}\frac{2q^{-1}t}{1+2q^{-1}t}
\ge
\frac{t}{pq}
$$
whenever $t\le 1/2$. Thus, whenever $t\le 1/2$, we have
$$
\|I_{A_s}(|f| - s)\|_p\le
2^{1-\alpha}(\alpha\theta)^{1/\theta}C(p)^\alpha V_\gamma^{p, \theta, \alpha}(f)t^{\alpha/2}
+
[\gamma(A_s)]^{(pq)^{-1}t}\|I_{A_s}(|f| - s)\|_p.
$$
For the sets $A_s$ with $\gamma(A_s)\le e^{-2pq}$
we can take $t= pq(-\ln\gamma(A_s))^{-1}\le1/2$
and conclude that
$$
\|I_{A_s}(|f| - s)\|_p\le
2^{1-\alpha}(\alpha\theta)^{1/\theta}C(p)^\alpha(pq)^{\alpha/2} V_\gamma^{p, \theta, \alpha}(f)[-\ln\gamma(A_s)]^{-\alpha/2}
+
e^{-1}\|I_{A_s}(|f| - s)\|_p,
$$
since
$$
[\gamma(A_s)]^{(pq)^{-1}t} = e^{- (pq)^{-1}[-\ln\gamma(A_s)]t}=e^{-1}
$$
for such $t$.
The obtained inequality can be rewritten in the form
$$
\|I_{A_s}(|f| - s)\|_p\le C(p, \theta, \alpha)V_\gamma^{p, \theta, \alpha}(f)[-\ln\gamma(A_s)]^{-\alpha/2},
$$
where $C(p, \theta, \alpha)=2^{1-\alpha}(\alpha\theta)^{1/\theta}C(p)^\alpha e(e-1)^{-1}(pq)^{\alpha/2}$.
We now observe that $\gamma(A_s)\le \|f\|_p^p s^{-p}$ and
$I_{A_s}(|f| - s)\ge 2^{-1}I_{A_{2s}}|f|$.
Thus, if $t\ge e^{2q}$, taking $s= t\|f\|_p$, we have
$$
\int I_{\{|f|\ge2t\|f\|_p\}}|f|^p\, d\gamma \le \bigl(2p^{-\alpha/2}C(p, \theta, \alpha)\bigr)^p\bigl[V_\gamma^{p, \theta, \alpha}(f)\bigr]^p[\ln t]^{-p\alpha/2}.
$$
Multiplying both sides of the inequality by $t^{-1} [\ln t]^{-1+p\beta/2}$
and integrating with respect to $t$ from $e^{2q}$ to $+\infty$ we obtain
\begin{multline*}
\int_{e^{2q}}^\infty  t^{-1} [\ln t]^{-1+p\beta/2}\int I_{\{|f|\ge 2t\|f\|_p\}}|f|^p\, d\gamma\, dt
\\
\le
\bigl(2p^{-\alpha/2}C(p, \theta, \alpha)\bigr)^p\bigl[V_\gamma^{p, \theta, \alpha}(f)\bigr]^p\int_{e^{2q}}^\infty[\ln t]^{-1-p(\alpha-\beta)/2}t^{-1}dt
\\
=
\bigl(2p^{-\alpha/2}C(p, \theta, \alpha)\bigr)^p2p^{-1}(\alpha - \beta)^{-1}(2q)^{-p(\alpha-\beta)/2}\bigl[V_\gamma^{p,\alpha}(f)\bigr]^p.
\end{multline*}
The left-hand side of the above estimate is equal to
\begin{multline*}
\int|f|^p I_{\{|f|\ge 2e^{2q}\|f\|_p\}}\int_{e^{2q}}^{|f|\|f\|_p^{-1}2^{-1}} t^{-1} [\ln t]^{-1+p\beta/2}\, dt\, d\gamma
\\
=
2(p\beta)^{-1}\int|f|^p I_{\{|f|\ge 2e^{2q}\|f\|_p\}}\bigl([\ln(|f|\|f\|_p^{-1}2^{-1})]^{p\beta/2} - (2q)^{p\beta/2}\bigr)\, d\gamma
\\
\ge
2(p\beta)^{-1}p^{-p\beta/2}\int|f|^p I_{\{|f|\ge 2e^{2q}\|f\|_p\}}[\ln(|f|\|f\|_p^{-1})]^{p\beta/2}\, d\gamma
- 2(p\beta)^{-1}(2q)^{p\beta/2}\|f\|_p^p
\\
=
2(p\beta)^{-1}p^{-p\beta/2}\int|f|^p\bigl|\ln(|f|\|f\|_p^{-1})\bigr|^{p\beta/2}
- 2(p\beta)^{-1}p^{-p\beta/2}\int|f|^p I_{\{|f|< 2e^{2q}\|f\|_p\}}\bigl|\ln(|f|\|f\|_p^{-1})\bigr|^{p\beta/2}\, d\gamma
\\
- 2(p\beta)^{-1}(2q)^{p\beta/2}\|f\|_p^p
\end{multline*}
Thus, since $a|\ln a|^{\beta/2}\le 2e^{2q}(2q+1)^{\beta/2}$ if $a\in[0, 2e^{2q}]$, we have
$$
\int|f|^p\bigl|\ln(|f|\|f\|_p^{-1})\bigr|^{p\beta/2} \, d\gamma
\\
\le
C_1(p, \theta, \alpha, \beta)\bigl[V_\gamma^{p,\alpha}(f)\bigr]^p
+
C_2(p, \alpha, \theta, \beta)\|f\|_p^p
$$
with
$$
C_1(p,\theta, \alpha, \beta) = \bigl(2p^{-\alpha/2}C(p, \theta, \alpha)\bigr)^p(\alpha - \beta)^{-1}(2q)^{-p(\alpha-\beta)/2}p^{p\beta/2}\beta
$$
and
$$
C_2(p, \theta, \alpha, \beta) =(2^pe^{2qp}(2q+1)^{p\beta/2} + (2q)^{p\beta/2}p^{p\beta/2})
$$
It is readily seen that the obtained bound is equivalent to the announced assertion.
The theorem is proved.
\end{proof}

Finally,
let us discuss estimates for the best approximations by Hermite polynomials in $L^2(\gamma)$
with respect to a Gaussian measure $\gamma$.
Recall (see \cite[Section 2.9]{GM}) that the space $L^2(\gamma)$ can be decomposed into the direct sum of mutually orthogonal subspaces $\mathcal{H}_k$
consisting of the so-called Hermite polynomials of a fixed degree $k$:
$$
L^2(\gamma) = \bigoplus_{k=0}^\infty \mathcal{H}_k.
$$
This decomposition is also called the Wiener chaos decomposition.
The space $\mathcal{H}_k$ is actually the orthogonal complement of the space of all measurable
polynomials of degree $k-1$ in the space of all measurable polynomials of degree $k$.
Let $I_k$ be the projection operator to the subspace $\mathcal{H}_k$.
For any function $f\in L^2(\gamma)$ set
$$
E_N(f):= \inf\{\|f - f_N\|_2;\, f_N\in \bigoplus_{k=0}^{N-1} \mathcal{H}_k\}.
$$
The quantity $E_N(f)$ is the value of the best approximation of the function $f$
by linear combinations of Hermite polynomials of the given degree.
It is clear that
$$
E_N(f) = \|f - I_0(f) - \ldots - I_{N-1}(f)\|_2.
$$

We now prove a Jackson--Stechkin-type inequality for the quantity $E_N(f)$
involving the Gaussian modulus of continuity $\sigma_{\gamma, 2}(f, \cdot)$.

\begin{theorem} For any function $f\in L^2(\gamma)$ we have
$$
E_{N-1}(f)\le \sigma_{\gamma,2}(f, \sqrt{2\pi} N^{-1/2}).
$$
\end{theorem}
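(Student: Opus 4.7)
The plan is to bound $\sigma_{\gamma,2}(f,\cdot)$ from below by exhibiting an explicit test vector field, built from the Wiener chaos decomposition of $f$, that captures $E_{N-1}(f)$ exactly. I expect to obtain in fact the stronger bound $E_{N-1}(f)\le \sigma_{\gamma,2}(f,N^{-1/2})$, from which the stated inequality with $\sqrt{2\pi}\,N^{-1/2}$ follows at once by monotonicity of $\sigma_{\gamma,2}(f,\cdot)$.

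Assuming $E_{N-1}(f)>0$ (else the claim is trivial), write $f=\sum_{k\ge 0} I_kf$ and $g:=\sum_{k\ge N}I_kf$, so that $\|g\|_2=E_{N-1}(f)$. Set
$$
h:=E_{N-1}(f)^{-1}\sum_{k\ge N}k^{-1}I_kf\in L^2(\gamma),\qquad \Phi:=-\nabla h.
$$
Since the Ornstein--Uhlenbeck generator $L=\mathrm{div}_\gamma\nabla$ acts as multiplication by $-k$ on $\mathcal{H}_k$, we have $\mathrm{div}_\gamma\Phi=E_{N-1}(f)^{-1}g$, so $\|\mathrm{div}_\gamma\Phi\|_2=1$. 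The definition of $\mathrm{div}_\gamma$ as the adjoint of $\nabla$ gives
$$
\|\Phi\|_2^2=\int h\cdot(-\mathrm{div}_\gamma\nabla h)\,d\gamma=E_{N-1}(f)^{-2}\sum_{k\ge N}k^{-1}\|I_kf\|_2^2\le \frac{1}{N},
$$
and by orthogonality of the chaos spaces,
$$
\int (\mathrm{div}_\gamma\Phi)\,f\,d\gamma=E_{N-1}(f)^{-1}\int g\cdot f\,d\gamma=\|g\|_2=E_{N-1}(f).
$$

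The remaining issue is that $\Phi\in L^2(\gamma;H)$ need not lie in the test class $\mathcal{FC}_0^\infty(X,H)$. I would handle this with a standard three-step approximation: truncate the chaos expansion of $h$ at level $M$; within each level, approximate the chaos element in $L^2(\gamma)$ by polynomials in finitely many basis functionals $l_1,\ldots,l_n\in X_\gamma^*$; and then localize by multiplying with smooth cutoffs $\chi_R(l_1,\ldots,l_n)$ on ever-larger Euclidean balls. A diagonal subsequence $\Phi_m\in\mathcal{FC}_0^\infty(X,H)$ can be chosen so that $\Phi_m\to\Phi$ in $L^2(\gamma;H)$ and $\mathrm{div}_\gamma\Phi_m\to\mathrm{div}_\gamma\Phi$ in $L^2(\gamma)$; after rescaling by $(1+\varepsilon_m)^{-1}$ to enforce $\|\mathrm{div}_\gamma\Phi_m\|_2\le 1$, the continuity of $\sigma_{\gamma,2}(f,\cdot)$ (noted after Definition~\ref{D4.1}) allows passage to the limit, giving $\sigma_{\gamma,2}(f,N^{-1/2})\ge E_{N-1}(f)$.

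The main obstacle is precisely this approximation step: multiplying $\Phi_m$ by a cutoff $\chi_R$ introduces an extra commutator term $\langle\Phi_m,\nabla\chi_R\rangle_H$ in the divergence that must be shown to become negligible in the limit. This is routine in Gaussian analysis once $\chi_R$ is chosen to equal $1$ on arbitrarily large Euclidean balls in the basis coordinates, but it is the only genuine technicality in an otherwise algebraic argument.
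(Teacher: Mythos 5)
Your proof is correct, but it takes a genuinely different route from the paper's. You build a single explicit test field adapted to $f$: with $g=\sum_{k\ge N}I_kf$ you take $\Phi=-\nabla h$ for $h=E_{N-1}(f)^{-1}\sum_{k\ge N}k^{-1}I_kf$, i.e. $\Phi=-E_{N-1}(f)^{-1}\nabla(-L)^{-1}g$, and you compute $\|\Phi\|_2^2=E_{N-1}(f)^{-2}\sum_{k\ge N}k^{-1}\|I_kf\|_2^2\le N^{-1}$ exactly by integration by parts. The paper instead dualizes: for each test function $\varphi$ with $\|\varphi\|_2\le1$ it represents the high-chaos part $\varphi-I_0(\varphi)-\dots-I_{N-1}(\varphi)$ as $\mathrm{div}_\gamma\bigl(-\int_0^\infty\nabla T_t(\varphi-I_0(\varphi)-\dots-I_{N-1}(\varphi))\,dt\bigr)$ and bounds the norm of that field by $\int_0^\infty e^{-t/2}(1-e^{-t})^{-1/2}e^{-Nt/2}\,dt=B((N+1)/2,1/2)\le\sqrt{2\pi}N^{-1/2}$. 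Both constructions rest on the same fact, that $(-L)^{-1}$ acts as $k^{-1}$ on $\mathcal{H}_k$, but your exact spectral computation avoids the triangle inequality through the semigroup and the Beta-function estimate, so you get the sharper conclusion $E_{N-1}(f)\le\sigma_{\gamma,2}(f,N^{-1/2})$, from which the stated inequality follows by monotonicity of $\sigma_{\gamma,2}(f,\cdot)$. The approximation issue you flag is genuine but not specific to your argument: the paper's field is likewise not literally in $\mathcal{FC}_0^\infty(X,H)$ (the projections $I_k(\varphi)$ are unbounded polynomials), and in both cases one invokes the standard density of cylindrical fields in the graph norm of $\mathrm{div}_\gamma$; your sketch of chaos truncation, cylindrical approximation and smooth cutoff, with the commutator term $\langle\Phi,\nabla\chi_R\rangle_H$ vanishing in $L^2(\gamma)$ as $R\to\infty$, is an adequate way to close this step.
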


\begin{proof}
For an arbitrary function $\varphi\in\mathcal{FC}^\infty(X)$ with $\|\varphi\|_2\le1$
we have
\begin{multline*}
\int \varphi (f - I_0(f) - \ldots - I_{N-1}(f))\, d\gamma
=
\int (\varphi - I_0(\varphi) - \ldots - I_{N-1}(\varphi)) f \, d\gamma
\\
=
\int {\rm div}_\gamma\Bigl(-\int_0^\infty \nabla T_t(\varphi - I_0(\varphi) - \ldots - I_{N-1}(\varphi))\, dt\Bigr) f\, d\gamma,
\end{multline*}
where we have used the equality
$$
\psi(x)= - \int_0^\infty L T_t\psi(x)\, dt =
{\rm div}_\gamma\Bigl(-\int_0^\infty \nabla T_t\psi(x)\, dt\Bigr)
$$
for an arbitrary function $\psi\in \mathcal{FC}^\infty(X)$ with $\int\psi\, d\gamma=0$,
where $L$ is the Ornstein--Uhlenbeck operator (see \cite[Section 1.4 and Remark 5.8.7]{GM}).
Recall that
$$
\|\nabla T_t\psi\|_2\le \frac{e^{-t}}{\sqrt{1-e^{-2t}}}\|\psi\|_2
$$
for any function $\psi\in\mathcal{FC}^\infty(X)$
and that
$$
T_tg = \sum_{k=0}^\infty e^{-kt}I_k(g),
$$
which yields the estimate
$$
\|T_t(g - I_0(g) - \ldots - I_{N-1}(g))\|_2\le e^{-Nt}\|g\|_2,
$$
for all $g\in L^2(\gamma)$.
We now note that
\begin{multline*}
\Bigl\|\int_0^\infty \nabla T_t(\varphi - I_0(\varphi) - \ldots - I_{N-1}(\varphi))\, dt\Bigr\|_2
\le
\int_0^\infty\|\nabla T_t(\varphi - I_0(\varphi) - \ldots - I_{N-1}(\varphi))\|_2\, dt
\\
\le
\int_0^\infty\frac{e^{-t/2}}{\sqrt{1-e^{-t}}}\|T_{t/2}(\varphi - I_0(\varphi) - \ldots - I_{N-1}(\varphi))\|_2\, dt
\\
\le
\|\varphi\|_2\int_0^\infty\frac{e^{-t/2}}{\sqrt{1-e^{-t}}}e^{-Nt/2}\, dt
=
B((N+1)/2, 1/2) \|\varphi\|_2,
\end{multline*}
where $B(x, y)$ is the standard beta function.
It can be easily verified that
$$
\frac{\Gamma(x+1/2)}{\Gamma(x)}\le\sqrt{x},
$$
where $\Gamma(\cdot)$ is the standard gamma function.
Indeed, introducing the probability density
$$
\rho_x(t):=[\Gamma(x)]^{-1}t^{x-1}e^{-t}I_{\{t>0\}}
$$
and applying Jensen's inequality we obtain
$$
\frac{\Gamma(x+1/2)}{\Gamma(x)} = \int \sqrt t \rho_x(t)\, dt\le \sqrt{\int t\rho_x(t)\, dt}
=
\sqrt{\frac{\Gamma(x+1)}{\Gamma(x)}} = \sqrt{x}.
$$
Thus,
$$
B((N+1)/2, 1/2) = \frac{\Gamma((N+1)/2)\Gamma(1/2)}{\Gamma(1+N/2)}
=
\frac{\sqrt{\pi}\Gamma(N/2+1/2)}{N/2 \Gamma(N/2)}\le
\sqrt{2\pi}N^{-1/2}.
$$
Therefore,
$$
\Bigl\|\int_0^\infty \nabla T_t(\varphi - I_0(\varphi) - \ldots - I_{N-1}(\varphi))\, dt\Bigr\|_2\le
\sqrt{2\pi}N^{-1/2}.
$$
We also note that
$$
\Bigl\|{\rm div}_\gamma\Bigl(-\int_0^\infty \nabla T_t(\varphi - I_0(\varphi) - \ldots - I_{N-1}(\varphi))\, dt\Bigr)\Bigr\|_2
=
\|\varphi - I_0(\varphi) - \ldots - I_{N-1}(\varphi)\|_2\le \|\varphi\|_2\le1.
$$
So,
$$
\int \varphi (f - I_0(f) - \ldots - I_{N-1}(f))\, d\gamma\le \sigma_{\gamma, 2}(f, \sqrt{2\pi}N^{-1/2}),
$$
which completes  the proof.
\end{proof}


\begin{thebibliography}{99}

\bibitem{AMP}
Ambrosio L., Miranda Jr M. and Pallara D.
$``$Some fine properties of BV functions on Wiener spaces$"$, {\it Analysis and Geometry in Metric Spaces} 3:1 (2015), 212--230.


\bibitem{BIN}
Besov O.V., Il'in V.P., Nikolskii S.M.
{\it Integral representations of functions and imbedding theorems},
V.~I,~II.
Winston \& Sons, Washington; Halsted Press, New York -- Toronto
-- London, 1978, 1979.

\bibitem{FH}
Fukushima M., Hino M.
$``$On the space of BV functions and a related stochastic
calculus in infinite dimensions$"$, {\it J. Funct. Anal.} 183:1 (2001), 245--268.

\bibitem{GM}
Bogachev V.I. {\it Gaussian measures}, Amer. Math. Soc., Providence, Rhode Island, 1998.

\bibitem{B16}
Bogachev V.I.
      $``$Distributions of polynomials on multidimensional and infinite-dimen\-sio\-nal
     spaces with measures$"$, {\it Uspehi Mat. Nauk} 71:4 (2016), 107--154
     (in Russian); English transl. in {\it Russian Math. Surveys} 71:4 (2016), 1--47.

\bibitem{BKZ}
Bogachev V.I., Kosov E.D. and Zelenov G.I.
$``$Fractional smoothness of distributions of
polynomials and a fractional analog of the
Hardy--Landau--Littlewood inequality$"$, to appear in {\it Trans. Amer. Math. Soc.},
http://arxiv.org/abs/1602.05207.

\bibitem{BKP} Bogachev V.I., Kosov E.D. and Popova S.N. $``$New approach to the Nikolskii--Besov classes$"$, to appear, https://arxiv.org/abs/1707.06477.

\bibitem{Kol1}
Kolyada V.I. $``$On imbedding in classes $\phi(L)$$"$, {\it Izv. AN. SSSR} 39:2 (1975), 418--437 (in Russian);
        English transl. in {Math. USSR-Izv.} 9:2 (1975), 395--413.

\bibitem{Kol2}
Kolyada V.I. $``$Estimates of rearrangements and imbedding theorems$"$, {\it Mat. Sb.} 136(178):1(5) (1988), 3--23 (in Russian);
        English transl. in {\it Math. USSR-Sb.} 64:1 (1989), 1--21.


\bibitem{Kol3}
Kolyada V.I. $``$Rearrangements of functions and embedding theorems$"$, {\it Uspekhi Mat. Nauk} 44:5(269) (1989), 61--95 (in Russian);
English transl. in {\it Russian Math. Surveys} 44:5 (1989), 73--117.

\bibitem{Kos}
Kosov E.D. $``$Fractional smoothness
of images of logarithmically concave measures under polynomials$"$, to appear, https://arxiv.org/abs/1605.00162.

\bibitem{LedIGA}
Ledoux M. $``$Isoperimetry and Gaussian analysis$"$,
Lecture Notes in Math. 1648 (1996), 165--294.

\bibitem{LedLS}
Ledoux M. {\it Concentration of measure and logarithmic Sobolev inequalities}, Seminaire de probabilites XXXIII, Springer Berlin Heidelberg, 1999.

\bibitem{Led}
Ledoux M. $``$Semigroup proofs of the isoperimetric inequality in Euclidean and Gauss space$"$,
{\it Bulletin des sciences mathematiques} 118:6 (1994), 485--510.

\bibitem{Nikol77}
Nikolskii S.M. {\it Approximation of functions of several variables and imbedding theorems},
Transl. from Russian. Springer-Verlag, New York -- Heidelberg, 1975 (Russian ed.: Moscow, 1977).

\bibitem{Trieb}
Triebel H. {\it Theory of function spaces}, V.II, Birkh\"auser Verlag, Basel, 1992.

\bibitem{Stein}
Stein E.
{\it Singular integrals and differentiability properties of functions},
Princeton University Press, Princeton, 1970.

\bibitem{UlIzv}
Ul'yanov P.L. $``$The imbedding of certain function classes $H_p^\omega$$"$, {\it Izv. Akad. Nauk SSSR} 32:3 (1968), 649--686 (in Russian);
English transl. in {\it Math. USSR-Izv.} 2:3 (1968), 601--637.

\bibitem{UlMSb}
Ul'yanov P.L. $``$Imbedding theorems and relations between best approximations (moduli of continuity) in different metrics$"$,
{\it Mat. Sb.} 81(123):1 (1970), 104--131 (in Russian);
English transl. in {\it Math. USSR-Sb.} 10:1 (1970), 103--126.

\end{thebibliography}
\end{document}